\newtheorem {thm}{Theorem}[section]
\newtheorem{lem}[thm]{Lemma}
\newtheorem{prop}[thm]{Proposition}
\newtheorem{cor}[thm]{Corollary}
\newtheorem{df}[thm]{Definition}
\newtheorem{ex}[thm]{Example}
\newtheorem{exs}[thm]{Examples}
\begin{document}

\title[Rings in which elements are a sum of a central and a nilpotent element]{Rings in which elements are a sum of a central and a nilpotent element}

\author{Yosum Kurtulmaz}
\address{Yosum Kurtulmaz, Department of Mathematics, Bilkent University,
Ankara, Turkey}\email{ yosum@fen.bilkent.edu.tr}

\author{Abdullah Harmanci}
\address{Abdullah Harmanci, Department of Mathematics, Hacettepe
University, Ankara,~ Turkey}\email{harmanci@hacettepe.edu.tr}

\date{}
\begin{abstract}  In this paper, we introduce a new class of rings whose elements are
a sum of a central element and a nilpotent element, namely, a ring
$R$ is called
 $CN$ if each element $a$ of $R$  has a decomposition $a = c + n$
where $c$ is central and $n$ is nilpotent. In this note, we characterize elements in
$M_n(R)$ and  $U_2(R)$ having CN-decompositions. For any field
$F$, we give examples to show that $M_n(F)$ can not be a CN-ring.
For a division ring $D$, we prove that if  $M_n(D)$ is a CN-ring,
then the cardinality of the center of $D$ is strictly greater than
$n$. Especially, we investigate several kinds of conditions under
which some subrings of full matrix rings over CN rings are CN.

\vskip 0.5cm
\noindent{\bf2010 AMS Mathematics Subject Classification}: 15B33, 15B36, 16S70, 16U99.\\
\noindent{\bf Key words and phrases:} CN ring, CN decomposition, matrix
ring, uniquely nil clean ring, uniquely nil *-clean ring.
\end{abstract}
\maketitle
\section{Introduction}  Throughout this paper all rings are associative with identity unless otherwise stated. Let $R$ be a ring.  Inv$(R)$, $J(R)$, $C(R)$ and nil$(R)$
will denote the group of units, the Jacobson radical, the center
and the set of all nilpotent elements of a ring $R$, respectively.
Recall that in \cite{Di}, uniquely nil clean rings are defined. An
element $a$ in a ring $R$ is called {\it uniquely nil clean} if
there is a unique idempotent $e\in R$ such that $a-e$ is
nilpotent. The ring $R$ is {\it uniquely nil clean} if each of its
elements is uniquely clean. It is proved that in a uniquely nil
clean ring, every idempotent is central. Also a uniquely nil clean
ring $R$ is called {\it uniquely strongly nil clean} \cite{HTY} if
$a$ and $e$ commute. Strongly nil cleanness and uniquely strongly
nil cleanness are equivalent by \cite{Di}. Let $R$ be a (*)-ring.
In \cite{LZ}, $a\in R$ is called {\it uniquely strongly nil
*-clean ring} if there is a unique projection $p\in R$, i.e., $p^2
= p = p^*$, and $n\in$ nil$(R)$ such that $a = p + n$ and $pn =
np$. $R$ is called a {\it uniquely strongly nil *-clean ring} if
each of its elements is uniquely strongly nil *-clean. Another
version of the notion of clean rings is that of CU rings. In
\cite{CU}, an element $a\in R$ is called a {\it CU element} if
there exist $c\in C(R)$ and $n\in$ nil$(R)$ such that $a = c + n$.
The ring $R$ is called {\it CU} if each of its elements is CU.
Motivated by these facts, we investigate basic properties of rings
in which every element is the sum of a central element and a
nilpotent element.

In what follows, $\Bbb{Z}_n$ is the ring of integers modulo $n$
for some positive integer $n$. Let $M_n(R)$ denote the full matrix
ring over $R$ and $U_n(R)$ stand for the subring of $M_n(R)$
consisting of all $n\times n$ upper triangular matrices. And in
the following, we give definitions of  some other subrings of
$U_n(R)$ to discuss in the sequel whether they satisfy CN
property:
$$D_n(R)=\{(a_{ij})\in M_n(R) \mid ~\mbox{all diagonal entries
of}~ (a_{ij})~\mbox{are equal}\},$$
$$ V_n(R) = \left\{\sum^n_{i=j}\sum^n_{j=1}a_je_{(i-j+1)i}\mid a_j\in R\right\},$$
 $$ V^k_n(R) = \left\{
\sum^n_{i=j}\sum^k_{j=1}x_je_{(i-j+1)i} +
\sum^{n-k}_{i=j}\sum^{n-k}_{j=1}a_{ij}e_{j(k+i)} : x_j, a_{ij}\in
R\right\}$$ where $x_i\in R$, $a_{js}\in R$, $1\leq i\leq k$,
$1\leq j\leq n-k$ and $k + 1\leq s\leq n$,
$$ D^k_n(R)=\{\left\{
\sum^k_{i=1}\sum^n_{j=k+1}a_{ij}e_{ij} +
\sum^n_{j=k+2}b_{(k+1)j}e_{(k+1)j} + cI_n\mid a_{ij}, b_{ij}, c\in
R\right\}$$ where $k =[n/2]$, i.e., $k$ satisfies $n = 2k$ when
$n$ is an even integer, and $n = 2k+1$ when $n$ is an odd integer,
and
 $$D^{\Bbb Z}_n(R) = \left\{(a_{ij})\in U_n(R)\mid a_{11} = a_{nn}\in \Bbb Z,
 a_{ij}\in R, \{i, j\}\subseteq \{2, 3,\dots, n-1\}\right\}.$$

 \section{Basic Properties and Examples}

\begin{df}{\rm Let $R$ be a ring with identity. An element $a\in R$ is called {\it CN} or {\it it has a CN-decomposition}
 if $a = c + n$, where $c\in C(R)$ and $n\in$ nil$(R)$. If every element of $R$ has a CN decomposition, then $R$ is called {\it a CN ring}.}
\end{df}
 We present some examples to illustrate the concept of CN property for rings.
\begin{ex}{\rm \begin{enumerate}\item[(1)]   Every commutative ring is CN. \item[(2)]
Every nilpotent element in a ring $R$ has a CN
decomposition.\item[(3)] For a field $F$ and for any positive
integer $n$, $D_n(F)$ is a CN ring.\end{enumerate}}
\end{ex}

\begin{prop} Let $R$ be a ring and $n$ a positive integer. Then $A\in M_n(R)$ has a
CN decomposition if and only if for each $P\in GL_n(R)$,
$PAP^{-1}\in M_n(R)$ has a CN decomposition.
\end{prop}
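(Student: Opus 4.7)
The plan is to observe that the proposition is essentially a statement about conjugation preserving both halves of the CN decomposition, so the proof is symmetric (the ``only if'' implication does the real work, while the ``if'' direction is trivial by taking $P = I_n$).

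First I would recall the well-known description of the center of a matrix ring: $C(M_n(R)) = \{cI_n : c \in C(R)\}$, i.e., the central elements of $M_n(R)$ are exactly the scalar matrices with entries from $C(R)$. This is the key ingredient, because a scalar matrix commutes with every $P \in GL_n(R)$, so conjugation fixes central elements pointwise.

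For the ``only if'' direction, suppose $A = C + N$ with $C \in C(M_n(R))$ and $N \in \mathrm{nil}(M_n(R))$. Write $C = cI_n$ with $c \in C(R)$. Then for any $P \in GL_n(R)$,
\[
PAP^{-1} = PCP^{-1} + PNP^{-1} = C + PNP^{-1},
\]
where the second equality uses that $C$ is a scalar matrix and hence commutes with $P$. Since $N^k = 0$ implies $(PNP^{-1})^k = PN^kP^{-1} = 0$, the matrix $PNP^{-1}$ is nilpotent, and this exhibits a CN-decomposition of $PAP^{-1}$. The ``if'' direction is obtained by specializing to $P = I_n$ (or equivalently, running the same argument with $P$ replaced by $P^{-1}$ on the decomposition of $PAP^{-1}$).

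There is no real obstacle here; the only non-trivial input is the identification of $C(M_n(R))$ with scalar matrices over $C(R)$, which can be cited as a standard fact. Once this is in hand, both directions are one-line computations using that scalar matrices are conjugation-invariant and that nilpotency is a conjugation invariant.
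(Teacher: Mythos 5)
Your proof is correct and follows essentially the same route as the paper: conjugation fixes central elements (since they commute with $P$) and preserves nilpotency, so a CN-decomposition transports across conjugation; the only cosmetic difference is that you invoke the identification $C(M_n(R))=\{cI_n: c\in C(R)\}$, whereas the paper only needs the weaker (and immediate) fact that a central element commutes with $P$, and it handles the converse by conjugating back with $P^{-1}$ rather than specializing to $P=I_n$.
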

\begin{proof} Assume that $A\in M_n(R)$ has a CN decomposition $A = C + N$ where $C\in C(M_n(R))$ and $N\in$nil$(M_n(R))$. Then $PAP^{-1} = PCP^{-1} + PNP^{-1}$ is a CN decomposition of $PAP^{-1}$ since $PCP^{-1} = C\in C(M_n(R))$ and it is obvious that $PNP^{-1}\in$ nil$(M_n(R)) $ . Conversely, suppose that $PAP^{-1}$ has a CN decomposition $PAP^{-1} = C + N$. Then $A = P^{-1}CP + P^{-1}NP$ is the CN decomposition of $PAP^{-1}$.
\end{proof}

Let $R$ be a commutative ring and $n$ a positive integer. The
following result gives us a way to find out whether $A\in M_n(R)$
has a CN decomposition. Note that it is easily shown that for a commutative ring $A\in C(M_n(R))$ if and only if $A = cI_n$ for some $c\in R$.
\begin{thm}\label{har} Let $R$ be a commutative ring.
Then $A\in M_n(R)$ has a CN decomposition if and only if $A -
cI_n\in$ nil$(M_n(R))$ for some $c\in R $.
\end{thm}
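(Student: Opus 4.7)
The plan is to use the characterization of $C(M_n(R))$ for commutative $R$ that the authors flagged in the sentence just before the theorem: an element of $M_n(R)$ is central if and only if it is a scalar matrix $cI_n$. Once that identification is in hand, the theorem is essentially a reformulation of the definition of CN-decomposition for matrix rings over a commutative ring.

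For the forward direction, I would start with a CN-decomposition $A = C + N$ with $C \in C(M_n(R))$ and $N \in \mathrm{nil}(M_n(R))$, apply the cited characterization of the center to write $C = cI_n$ for some $c \in R$, and then observe that $A - cI_n = N$ is nilpotent, as required.

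For the converse, I would let $c \in R$ be such that $A - cI_n \in \mathrm{nil}(M_n(R))$, set $C := cI_n$ and $N := A - cI_n$, and note that $C$ is a scalar matrix and hence central in $M_n(R)$ (whether or not $R$ is commutative, in fact), while $N$ is nilpotent by hypothesis. Thus $A = C + N$ is a CN-decomposition.

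There is no serious obstacle here; the only substantive input is the standard fact that $C(M_n(R)) = \{cI_n : c \in R\}$ when $R$ is commutative, and the authors have already stated this as a preliminary remark, so it may be cited rather than reproved. The proof should be only a few lines long.
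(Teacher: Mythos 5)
Your proposal is correct and follows essentially the same route as the paper: both directions reduce to the identification $C(M_n(R))=\{cI_n : c\in R\}$ stated in the remark preceding the theorem. If anything, your write-up makes the forward direction more explicit than the paper's, which leaves the appeal to that characterization implicit.
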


\begin{proof} Assume that $A\in M_n(R)$ has a CN decomposition.
By assumption there exists $c\in R$ such that $A - cI_n\in$ nil$(M_n(R))$. Conversely, suppose that for any $A\in M_n(R)$, there
exists $c\in R$ such that $A - cI_n\in$ nil$(M_n(R))$. Since $cI_n$ is
central in $ M_n(R)$, $A\in M_n(R)$ has a CN decomposition.
\end{proof}
 {\bf Remark}. Let $R$ be a commutative ring. Then $A\in M_n(R)$ is a nilpotent matrix if and only if all eigenvalues of $A$ are zero. A ring $R$ is {\it reduced} if $R$ has no nonzero nilpotent element.
 Hence we have.
\begin{cor} Let $R$ be a commutative reduced ring and $n$ a positive integer. Then $A\in M_n(R)$ has a
CN decomposition if and only if the only eigenvalue for $A-cI_n$
is $0$ for some $c\in R$.
\end{cor}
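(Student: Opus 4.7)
The plan is to derive this as a direct consequence of Theorem \ref{har} combined with the remark immediately preceding the corollary. By Theorem \ref{har}, $A \in M_n(R)$ has a CN decomposition if and only if there exists $c \in R$ with $A - cI_n \in \mathrm{nil}(M_n(R))$. The remark asserts that over a commutative ring, a matrix is nilpotent if and only if all of its eigenvalues are zero. Applying this characterization to the matrix $A - cI_n$ yields precisely the stated biconditional, so the proof consists of chaining these two facts together.

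The only mild subtlety is making sure the eigenvalue criterion from the remark is unambiguous in the commutative (not necessarily field) setting, and this is exactly where the reducedness hypothesis is used. Concretely, one wants: for a reduced commutative ring $R$, a matrix $B \in M_n(R)$ is nilpotent if and only if its characteristic polynomial $\chi_B(x)$ equals $x^n$ (equivalently, the only eigenvalue of $B$ is $0$). For the forward direction, reduce modulo an arbitrary prime $\mathfrak{p}$ and pass to the fraction field of $R/\mathfrak{p}$: the image of $B$ is nilpotent over a field, hence has characteristic polynomial $x^n$, so every non-leading coefficient of $\chi_B$ lies in every prime of $R$, hence in the nilradical, which is $0$ by reducedness. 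For the converse, Cayley–Hamilton gives $B^n = 0$ directly.

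Therefore the proof I would write is essentially one line: apply Theorem \ref{har} to replace ``CN decomposition'' with ``$A - cI_n$ nilpotent for some $c$'', then apply the eigenvalue characterization of nilpotent matrices from the remark (justified under the reduced hypothesis as above) to $B = A - cI_n$. I do not anticipate any real obstacle; the corollary is a packaging of the preceding theorem and remark, with the reduced hypothesis inserted precisely to make the phrase ``the only eigenvalue is $0$'' a clean equivalent of nilpotency.
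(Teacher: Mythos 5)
Your proposal is correct and follows exactly the route the paper intends: the corollary is stated with only ``Hence we have'' after Theorem \ref{har} and the preceding remark, so chaining the theorem with the eigenvalue characterization of nilpotency is precisely the paper's (implicit) argument. Your additional justification of that characterization over a reduced commutative ring, via reduction modulo primes and Cayley--Hamilton, is more careful than the paper, which simply asserts the remark.
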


\begin{prop}\label{id} Let $R$ be a commutative ring. Then $U_2(R)$ is a CN
ring if and only if for any $a, b\in R$, there exists $c\in R$
such that $a-c$, $b-c\in$ nil$(R)$.
\end{prop}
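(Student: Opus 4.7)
The plan is to first pin down the center and the nilpotent elements of $U_2(R)$ when $R$ is commutative, and then translate the CN condition on $U_2(R)$ into the stated element-wise condition on $R$.

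First I would compute $C(U_2(R))$. For $X = \begin{pmatrix} x & y \\ 0 & z \end{pmatrix}$ to commute with every upper triangular matrix $\begin{pmatrix} a & b \\ 0 & d \end{pmatrix}$, comparison of the $(1,2)$-entries gives $b(x - z) + y(d - a) = 0$ for all $a, b, d \in R$. Specialising (e.g.\ $b = 1, a = d = 0$ and then $b = 0, a = 0, d = 1$) forces $x = z$ and $y = 0$, so $C(U_2(R)) = \{cI_2 : c \in R\}$. Next I would check that $\begin{pmatrix} a & b \\ 0 & d \end{pmatrix}$ is nilpotent in $U_2(R)$ if and only if $a, d \in \operatorname{nil}(R)$: the forward direction is immediate from the diagonal of a power, and the converse follows because if $a^m = d^m = 0$ then a direct expansion shows $A^{2m}$ has zero diagonal and upper-right entry $\sum_{i+j = 2m-1} a^i b d^j$, each of whose terms vanishes.

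For the forward direction, assume $U_2(R)$ is CN. Given $a, b \in R$, apply the hypothesis to $\operatorname{diag}(a, b)$. The central summand must be $cI_2$ for some $c \in R$ by Step~1, so the nilpotent summand is $\operatorname{diag}(a - c, b - c)$, whose diagonal entries are nilpotent by Step~2. This produces the required $c$.

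For the converse, take any $A = \begin{pmatrix} a & b \\ 0 & d \end{pmatrix} \in U_2(R)$ and choose $c \in R$ with $a - c, d - c \in \operatorname{nil}(R)$. Then $A = cI_2 + (A - cI_2)$ with $cI_2$ central, and $A - cI_2 = \begin{pmatrix} a - c & b \\ 0 & d - c \end{pmatrix}$ is nilpotent by Step~2, giving the CN decomposition.

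There is no real obstacle; the only mildly technical point is the identification of $C(U_2(R))$, which amounts to exploiting the freedom to choose $a, b, d$ independently. Everything else is a direct translation between a diagonal matrix condition in $U_2(R)$ and the element-wise condition on $R$.
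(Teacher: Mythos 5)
Your proof is correct and follows essentially the same route as the paper's: identify $C(U_2(R))$ as the scalar matrices and $\operatorname{nil}(U_2(R))$ as the matrices with nilpotent diagonal entries, then test on $\operatorname{diag}(a,b)$ for one direction and subtract $cI_2$ for the other. You simply write out the verification of the center and the nilpotency criterion more explicitly than the paper does.
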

\begin{proof} Let $A = \begin{bmatrix}a&0\\0&b\end{bmatrix}\in M_2(R)$ has CN decomposition if and only if there exist $C = \begin{bmatrix}c&0\\0&c\end{bmatrix}\in C(M_2(R))$ and $N = \begin{bmatrix}x&y\\0&z\end{bmatrix}\in$ nil$(M_2(\Bbb R))$ such that $A = C + N$. Since $N\in$ nil$(M_2(R))$  if and only if $x$, $z\in$ nil$(R)$, $A = C + N$ is the CN decomposition of $A$ if and only if there exists $c\in R$ such that $A - cI\in$ nil$(M_2(R))$ if and only if $a -c$, $b - c\in$ nil$(R)$.
\end{proof}
\begin{ex}{\rm Let $R=\Bbb Z$ and  $A =
\left[\begin{array}{cc} 3 & 0
\\ 0 & 5 \\\end{array}\right]\in U_2(R)$. Then there is no $c\in \Bbb Z $ such that $3-c$ and $5-c$ are nilpotent. By Proposition \ref{id}, $ U_2(\Bbb Z)$ is not CN.}
\end{ex}
\begin{thm}\label{fie} Let $R$ be a commutative local ring. If $M_2(R)$ is a CN ring, then $R/J(R)$ is not
isomorphic to $\Bbb Z_2$.
\end{thm}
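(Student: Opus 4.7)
I would argue by contradiction: assume both that $M_2(R)$ is a CN ring and that $R/J(R)\cong\Bbb Z_2$. The strategy is to exhibit one explicit matrix whose candidate CN decompositions can all be ruled out by reducing modulo $J(R)$. Since $R$ is commutative, Theorem~\ref{har} lets me replace the CN condition on a matrix $A\in M_2(R)$ with the cleaner statement that some $c\in R$ satisfies $A-cI_2\in$ nil$(M_2(R))$.

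I would take the witness $A=\begin{bmatrix}1&0\\0&0\end{bmatrix}$. If $A-cI_2$ were nilpotent in $M_2(R)$, then applying the natural surjection $M_2(R)\to M_2(R/J(R))=M_2(\Bbb Z_2)$ entrywise, the image $\bar A-\bar c\,I_2$ would still be nilpotent in $M_2(\Bbb Z_2)$, because ring homomorphisms preserve nilpotency. The payoff is that $\bar c$ lies in $\Bbb Z_2=\{0,1\}$, so only two cases remain to check.

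The check in $M_2(\Bbb Z_2)$ is immediate: for $\bar c=0$ the image is $\bar A=\begin{bmatrix}1&0\\0&0\end{bmatrix}$, a nonzero idempotent (satisfying $\bar A^{n}=\bar A$) and therefore not nilpotent; for $\bar c=1$ the image is $\bar A-I_2=\begin{bmatrix}0&0\\0&1\end{bmatrix}$ in $M_2(\Bbb Z_2)$ (using $-1=1$), again a nonzero idempotent and hence not nilpotent. Both cases contradict the assumed nilpotence of $A-cI_2$, so $A$ has no CN decomposition in $M_2(R)$, contradicting the CN hypothesis.

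The main obstacle is conceptual rather than computational: the decisive move is reducing modulo $J(R)$, which collapses the infinite parameter space for $c$ down to two residues while preserving nilpotency, after which the problem becomes a trivial idempotent check in $M_2(\Bbb Z_2)$. Choosing an idempotent witness such as $\begin{bmatrix}1&0\\0&0\end{bmatrix}$, whose relevant translates remain idempotent in the residue, is precisely what makes the two-case verification succeed.
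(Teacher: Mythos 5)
Your proof is correct and follows essentially the same route as the paper: both arguments use the witness $A=\begin{bmatrix}1&0\\0&0\end{bmatrix}$ together with the fact that central elements of $M_2(R)$ are scalar matrices, so a CN decomposition would force both $1-c$ and $-c$ to be nilpotent. The only difference is the finishing step --- the paper invokes locality directly (one of $c$, $1-c$ must be a unit, yet both would be nilpotent), while you pass to $M_2(R/J(R))\cong M_2(\Bbb Z_2)$ and eliminate the two residues of $c$ by an idempotent check --- and both closings are valid.
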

\begin{proof} Assume that $M_2(R)$ is a CN ring.
Suppose that $R/J(R)$ is isomorphic to $\Bbb
Z_2$ and we get a contradiction. Let $A = \begin{bmatrix}1&0\\0&0\end{bmatrix}\in M_2(R)$ and
$f(c) = det(A - c I_2)$ be the characteristic polynomial of $A$. Then $f(c) = c(c - 1)\in$ nil$(R)$. By Proposition \ref{id}, $1 - c$ and $c$  are nilpotent. Since $1 = c + ( 1 - c)$, By hypothesis, $c$ or $1 - c$ is invertible, therefore $c\in J(R)$ or $1 - c\in J(R)$. This is a contradiction.
\end{proof}
In \cite{CU}, Chen and at al. defined and studied {\it CU} rings. Let $R$ be a ring. An element $a\in R$ has a {\it CU}-decomposition if $a = c+u$ for some $c\in C(R)$ and $u\in U(R)$. A ring $R$ is called {\it CU}, if every element of $R$ has a {\it CU}-decomposition.
\begin{prop}\label{first} Every CN ring is CU.
\end{prop}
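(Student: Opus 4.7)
The plan is to show that the CU decomposition can be obtained directly from the CN decomposition by a trivial shift, exploiting the fact that $1$ plus any nilpotent element is a unit.

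Given $a \in R$, I would apply the CN hypothesis to obtain a decomposition $a = c + n$ with $c \in C(R)$ and $n \in \mathrm{nil}(R)$. The idea is then to rewrite this as $a = (c - 1) + (1 + n)$. Clearly $c - 1 \in C(R)$ since $c$ and $1$ are both central. It remains to show $1 + n \in \mathrm{Inv}(R)$.

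For the unit claim, I would recall the standard nilpotent-plus-one argument: if $n^k = 0$, then $u = 1 - n + n^2 - \cdots + (-1)^{k-1}n^{k-1}$ satisfies $(1+n)u = u(1+n) = 1 - (-1)^k n^k = 1$, so $1 + n \in U(R)$. This yields the desired CU-decomposition $a = (c-1) + (1+n)$.

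There is no real obstacle here; the proposition is essentially a one-line observation once one notices that the translation $c \mapsto c-1$, $n \mapsto 1+n$ converts a central-plus-nilpotent decomposition into a central-plus-unit decomposition.
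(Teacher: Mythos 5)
Your proof is correct and takes essentially the same approach as the paper: both rest on the observation that $1$ plus (or minus) a nilpotent element is a unit, the only cosmetic difference being that the paper applies the CN hypothesis to $a+1$ to get $a = c + (n-1)$, whereas you apply it to $a$ and shift the $1$ from the central part to the nilpotent part, writing $a = (c-1) + (1+n)$. Your explicit verification that $1+n$ is invertible via the finite geometric series is a detail the paper leaves implicit.
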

\begin{proof} Let $R$ be a CN ring and $a\in R$. By assumption $a+1=c+n$ for some $c\in C(R)$ and $n\in N(R)$. Hence $a=c+(n-1)$ is a {\it CU} decomposition of $a$.
\end{proof}
\begin{thm} \label{kil} Let $R$ be a division ring and $n$ a positive integer. If $M_n(R)$ is a CN ring, then $|C(R)| > n$.
\end{thm}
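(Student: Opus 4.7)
The plan is to proceed by contraposition: assuming $|C(R)| \le n$, I will exhibit a matrix in $M_n(R)$ with no CN-decomposition. (The case $n = 1$ is automatic, since $0 \ne 1$ in $R$ forces $|C(R)| \ge 2 > 1$.) The key enabler, standard for any ring, is that $C(M_n(R)) = \{cI_n : c \in C(R)\}$, so a CN-decomposition of $A \in M_n(R)$ reduces to finding some $c \in C(R)$ for which $A - cI_n$ is nilpotent in $M_n(R)$.

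To construct the bad matrix, I will enumerate $C(R) = \{c_1, c_2, \ldots, c_k\}$ with $2 \le k \le n$ (possible because $0$ and $1$ are distinct central elements), and take
\[
A = \operatorname{diag}(c_1, c_2, \ldots, c_k, c_1, c_1, \ldots, c_1) \in M_n(R),
\]
the diagonal matrix whose first $k$ entries are all the distinct central values and whose remaining $n - k$ slots are padded with $c_1$. For any candidate $c \in C(R)$, the matrix $A - cI_n$ is again diagonal; because $c$ can coincide with at most one of the $k$ distinct values $c_1, \ldots, c_k$, at least one diagonal entry $c_i - c$ of $A - cI_n$ is nonzero.

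To finish, I will invoke that $R$ is a division ring, so its only nilpotent element is $0$, and the formula $\operatorname{diag}(d_1, \ldots, d_n)^m = \operatorname{diag}(d_1^m, \ldots, d_n^m)$ shows that a diagonal matrix over $R$ is nilpotent if and only if every diagonal entry vanishes. Consequently $A - cI_n$ is not nilpotent for any $c \in C(R)$, so $A$ admits no CN-decomposition, contradicting the CN hypothesis and forcing $|C(R)| > n$. There is essentially no obstacle; the only real design decision is to choose $A$ with more distinct central diagonal entries than there are available shifts $c \in C(R)$, after which the argument reduces to the elementary nilpotency observation for diagonal matrices over a division ring.
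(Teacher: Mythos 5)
Your proof is correct, and it uses the same test matrix as the paper — a diagonal matrix whose diagonal entries exhaust $C(R)$ — but finishes differently. The paper routes the argument through the $CU$ property: since every $c\in C(R)$ occurs on the diagonal of $A$, the matrix $A-cI_n$ always has a \emph{zero} diagonal entry, hence is never a unit, so $A$ has no $CU$-decomposition, and then Proposition~\ref{first} (every CN ring is CU) is invoked to conclude $M_n(R)$ is not CN. You instead argue directly about nilpotency: because $c$ can agree with at most one of the $k\ge 2$ distinct central values, $A-cI_n$ always has a \emph{nonzero} diagonal entry, and a diagonal matrix over a division ring is nilpotent only if every entry vanishes. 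Your route is self-contained (it does not need the CN$\Rightarrow$CU reduction) and, incidentally, is stated under the correct hypothesis $|C(R)|\le n$, whereas the paper's proof opens with ``Assume $|C(R)|<n$,'' which as written only rules out $|C(R)|<n$ and leaves the boundary case $|C(R)|=n$ unaddressed even though the theorem asserts the strict inequality $|C(R)|>n$; your padding of the remaining $n-k$ slots with $c_1$ handles $k=n$ cleanly. The one fact you use without proof, $C(M_n(R))=\{cI_n : c\in C(R)\}$, is standard and unobjectionable.
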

\begin{proof} Assume that $|C(R)| < n$.  Consider $A$ as a diagonal matrix
which has the property that each element of $C(R)$ is one of the
diagonal entries of $A$. For such a matrix $A$ there is no $c\in
C(R)$ for which $A - cI$ is a unit. Hence $M_n(R)$ is not a CU ring. By Proposition \ref{first}, $M_n(R)$ can not be a CN
ring. This contradicts hypothesis. So $|C(R)| > n$.
\end{proof}

The converse of Proposition \ref{first} is not true in general.
\begin{ex} {\rm Let $\Bbb{H}=\{ a+bi+cj+dk|a,b,c,d\in \Bbb{R} \}$ be the ring of real quaternions, where
$i^2=j^2=k^2=ijk=1$ and $ij = -ji, ik = -ki, jk = -kj$.
 $\Bbb{H}$ is a noncommutative  division ring. Note that $C(\Bbb{H})=\Bbb{R}$ and $nil(\Bbb{H})=0 .$
 Let $a\in \Bbb{H}$. If $a=0$, then $0=1+(-1)$ is the  {\it CU}-decomposition. If $a\neq 0$, then $a=0+a$ is the  {\it CU}-decomposition of $a$. Hence $\Bbb{H}$ is a {\it CU} ring.
 On the other hand there is no CN decomposition of $i\in \Bbb{H}$. Hence it is not a CN ring.}
\end{ex}

\begin{ex} {\rm Let $D$ be a division ring and consider the ring $D_2(D)$.
The ring $D_2(D)$ is a noncommutative local ring, and so it is a
CU-ring, but not a CN ring.}
\end{ex}

For a positive integer $n$, one may suspect that if $R$ is a {\it
CN} ring then the matrix ring $M_n(R)$ is also {\it CN}. The
following examples shows that this is not true in general. Also
whether or not $M_n(R)$ to be a CN ring does not depend on the
cardinality of $C(R)$ comparing with $n$, that is, $|C(R)|\geq n$
or $|C(R)|< n$.
\begin{exs}\label{harman} {\rm (1) Since $\Bbb Z$ is commutative, it is
a $CN$ ring. But $R = M_2(\Bbb Z)$ is not a $CN$ ring.\\ (2) $R = M_2(\Bbb Z_3)$ is not a CN ring.\\(3) $R = M_3(\Bbb Z_2)$ is not a CN ring.}
\end{exs}
\begin{proof} (1) Consider $A = \begin{bmatrix}1 & 2
\\ 3 & 6 \end{bmatrix}\in M_2(\Bbb Z)$ which is neither central nor nilpotent.
Let $C = \begin{bmatrix}r&0\\0&r\end{bmatrix}\in C(M_2(\Bbb Z))$ and
 $N = \begin{bmatrix}x&y\\z&t\end{bmatrix}\in$nil$(M_2(\Bbb Z))$ with
 $A = C + N$. Then $x + t = 0$ and $zy = xt$. This is a contradiction.
 Hence $A$ does not have CN decomposition.\\(2) Let $A =
\begin{bmatrix}1 & 0\\ 0 & 0\end{bmatrix}\in M_2(\Bbb Z_3)$ which is
neither central nor nilpotent. Assume that $A$ has CN decomposition with
$A = C + N$ where $C =
\begin{bmatrix}a &0\\ 0& a\end{bmatrix}\in C(M_2(\Bbb Z_3))$ and $N =
\begin{bmatrix}x&y\\t&u\end{bmatrix}\in$ nil$(M_2(\Bbb Z_3))$. $A = C + N$ implies
 $1 = a + x$, $0 = a + u$ and $y = t = 0$. These equalities do not satisfied in $\Bbb Z_3$.
 For if $a = 0$, then $x = 1$; if $a = 1$, then $x = 0$ and $u = 2$; if $a = 2$, then $x = 2$ and $u=1$.
All these lead us a contradition. Hence $M_2(\Bbb Z_3)$ is not a
CN ring.  \\(3) Let $A = \begin{bmatrix}1 & 0&0\\ 0 & 0&0
\\0&0&0\end{bmatrix}\in M_3(\Bbb Z_2)$ which is neither central
nor nilpotent. Assume that $A$ has CN decomposition with $A = C +
N$ where $C =
\begin{bmatrix}a &0&0\\ 0& a&0\\ 0&0&a\end{bmatrix}\in C(M_3(\Bbb Z_2))$ and $N =
\begin{bmatrix}x&y&z \\t&u&v\\k&l&m\end{bmatrix}\in$ nil$(M_3(\Bbb Z_2))$. $A = C + N$
 implies $1 = a + x$, $0 = a + u$, $0 = a + m$ and $y = z = v = t = k = l = 0$.
 These equalities do not satisfied in $\Bbb Z_2$. Hence $ M_3(\Bbb Z_2)$ is not a CN ring.
 In fact, assume that $1 = a + x$ holds in $\Bbb Z_2$. There are two cases for $a$. $a= 0$ or $a = 1$.
 If $a = 1$ then $x = 0$ and $u = 1$. $N$ being nilpotent implies $u = 1$ is nilpotent. A contradiction.
 Otherwise, $a = 0$. Then $x = 1$. Again $N$ being nilpotent implies $x = 1$ is nilpotent. A contradiction.
  Thus $M_3(\Bbb Z_2)$ is not a CN ring.
\end{proof}

In spite of the fact that $U_n(R)$ need not be {\it CN} for any
positive integer $n$, there are {\it CN} subrings of $U_n(R)$.

\begin{prop}\label{bes} For a ring $R$ and an integer $n \geq 1$, the following are equivalent:\\{\rm (1)}
$R$ is CN.
\\{\rm (2)} $D_n(R)$ is CN.\\ {\rm (3)} $D^k_n(R)$ is CN.\\{\rm (4)} $V_n(R)$ is CN.\\{\rm (5)} $V^k_n(R)$ is CN.
\end{prop}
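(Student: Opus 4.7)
The plan is to prove each of the equivalences $(1) \Leftrightarrow (i)$ for $i \in \{2,3,4,5\}$ by one uniform argument. Its backbone is the following structural observation: for each of the four subrings $S \in \{D_n(R), D^k_n(R), V_n(R), V^k_n(R)\}$, every element $A \in S$ may be written uniquely as $A = A_{11} I_n + N(A)$ where $N(A) \in S$ is strictly upper triangular. In particular, the map $\pi \colon S \to R$, $\pi(A) = A_{11}$, is a surjective ring homomorphism split by the embedding $r \mapsto rI_n$, and its kernel $K = \{A \in S : A_{11} = 0\}$ consists entirely of strictly upper triangular matrices. Since any strictly upper triangular $n \times n$ matrix satisfies $N^n = 0$, the ideal $K$ is nilpotent.

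For the forward direction $(1) \Rightarrow (i)$: let $A \in S$ and set $a = \pi(A)$. Using that $R$ is CN, write $a = c + m$ with $c \in C(R)$ and $m \in \mathrm{nil}(R)$. Then
\[
A \;=\; cI_n \;+\; (mI_n + (A - aI_n)),
\]
where $cI_n \in C(S)$ because $c \in C(R)$ forces $cI_n \in C(M_n(R))$, and the bracketed term is a sum of two commuting nilpotents: $mI_n$ is a central nilpotent of $M_n(R)$, while $A - aI_n \in K$. Two commuting nilpotents sum to a nilpotent, so this is a CN decomposition of $A$ in $S$.

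For the backward direction $(i) \Rightarrow (1)$: given $a \in R$, regard $aI_n \in S$ and write $aI_n = C + N$ with $C \in C(S)$ and $N \in \mathrm{nil}(S)$. Applying $\pi$ yields $a = \pi(C) + \pi(N)$, with $\pi(N) \in \mathrm{nil}(R)$ because $\pi$ is a ring homomorphism. To see $\pi(C) \in C(R)$, use that $rI_n \in S$ for every $r \in R$; then $C \cdot rI_n = rI_n \cdot C$, and applying $\pi$ gives $\pi(C)\,r = r\,\pi(C)$.

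The principal obstacle is verifying the structural observation for each of the four subrings. For $D_n(R)$ it is immediate from the definition. For $V_n(R)$ one reads off the $j = 1$ summand as $a_1 I_n$, the remaining summands living on strict superdiagonals. For $V^k_n(R)$ and $D^k_n(R)$ one must unravel the indexing conventions and check by direct inspection that every nonzero entry of $A - A_{11} I_n$ lies strictly above the diagonal, and that the resulting set of matrices is closed under both left and right multiplication by $S$ (so that $K$ is genuinely a two-sided ideal and $\pi$ a ring homomorphism). Nilpotency of $K$ is then automatic, and the uniform argument above completes the proof.
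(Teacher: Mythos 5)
Your proof is correct and follows essentially the same route as the paper: decompose the common diagonal entry $a = c + m$, take $cI_n$ as the central part and absorb the strictly upper triangular remainder into the nilpotent part, and for the converse embed $a$ as $aI_n$ and read off the $(1,1)$ entries. Your version is merely more systematic (treating all four subrings uniformly via the projection $\pi(A)=A_{11}$), where the paper works out only $(1)\Leftrightarrow(2)$ for $n=4$ and declares the remaining cases a repetition.
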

\begin{proof} Note that the elements of $D_n(R)$, $D^k_n(R)$, $V_n(R)$ and $V^k_n(R)$  having zero as diagonal
entries are nilpotent. To complete the proof, it is enough to show  (1) holds if and only if (2) holds for $n = 4$.
The other cases are just a repetition.\\ (1) $\Rightarrow$ (2) Let $A = \begin{bmatrix} a_1&a_2 & a_3 &a_4\\
0&a_1&a_5&a_6\\0&0&a_1&a_7\\0&0&0&a_1\end{bmatrix}\in D_4(R)$. By (1), there exist $c\in C(R)$ and $n\in$ nil$(R)$ such that $a_1 = c + n$.\\ Let $C = \begin{bmatrix} c&0&0&0\\
0&c&0&0\\0&0&c&0\\0&0&0&c\end{bmatrix}$ and $N = \begin{bmatrix} n&a_2 & a_3 &a_4\\
0&n&a_5&a_6\\0&0&n&a_7\\0&0&0&n\end{bmatrix}$. Then $C\in C(V_n(R))$ and $N\in$ nil$(D_n(R))$.\\ (2) $\Rightarrow$ (1) Let $a\in R$. By (2) $A = \begin{bmatrix} a&0 &0 &0\\0&a&0&0\\
0&0&a&0\\0&0&0&a\end{bmatrix}\in D_4(R)$ has a CN decomposition $A = C + N$ where $C = \begin{bmatrix} c&0 &0&0\\0&c&0&0\\0&0&c&0\\0&0&0&c\end{bmatrix}\in C(D_4(R))$ and $N = \begin{bmatrix} n&* &*&*\\0&n&*&*\\0&0&n&*\\0&0&0&n\end{bmatrix}\in C(D_n(R))$. Then $a = c + n$ with $c\in C(R)$ and $n\in$ nil$(R)$.
\end{proof}
\begin{lem} Every homomorphic image of CN ring is CN ring.
\end{lem}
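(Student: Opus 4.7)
The plan is to unwind the definitions directly: a homomorphic image of $R$ is (up to isomorphism) of the form $S = \varphi(R)$ for some surjective ring homomorphism $\varphi : R \to S$, so I would fix such a $\varphi$ and show that every $s \in S$ has a CN-decomposition in $S$.

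First I would lift: given $s \in S$, pick any $a \in R$ with $\varphi(a) = s$, and use the CN property of $R$ to write $a = c + n$ with $c \in C(R)$ and $n \in \mathrm{nil}(R)$. Applying $\varphi$ yields $s = \varphi(c) + \varphi(n)$, which is the candidate decomposition.

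Next I would verify the two properties required. Nilpotency transfers immediately: if $n^k = 0$ in $R$, then $\varphi(n)^k = \varphi(n^k) = 0$ in $S$, so $\varphi(n) \in \mathrm{nil}(S)$. For centrality, I would use surjectivity of $\varphi$: any $s' \in S$ can be written as $\varphi(b)$ for some $b \in R$, and then $\varphi(c)\,s' = \varphi(cb) = \varphi(bc) = s'\,\varphi(c)$, so $\varphi(c) \in C(S)$.

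There is no real obstacle here; the only subtle point is that centrality of $\varphi(c)$ genuinely relies on surjectivity (without it, one would only get that $\varphi(c)$ commutes with the image of $R$, which is the correct statement about the image as a subring anyway, so the conclusion still goes through). Hence $S$ is CN, completing the proof.
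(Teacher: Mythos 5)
Your proof is correct and follows essentially the same route as the paper's: lift $s$ to $a\in R$, apply the CN decomposition $a=c+n$, and push it forward through the surjection, noting that surjectivity is what makes $\varphi(c)$ central in $S$. Your write-up is actually more careful than the paper's on that last point, but the argument is the same.
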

\begin{proof} Let $f : R\rightarrow S$ be an epimorphism of rings with $R$ CN ring.
Let $s = f(x)\in S$ with $x\in R$. There exist $c\in C(R)$ and
$n\in$ nil$(R)$ such that $x = c + n$. Since $f$ is epic, $f(c)\in
C(S)$ and $f(n)\in$ nil$(R)$. Hence $s = f(c) + f(n)$ is CN
decomposition of $s$.
\end{proof}
\begin{prop}\label{direct} Let $R = \prod_{i\in I} R_i$ be a direct product of rings. $R$ is CN if and only if $R_i$ is CN for each $i\in I$.
\end{prop}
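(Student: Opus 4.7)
The plan is to handle the two directions separately, with the forward direction being an immediate application of the preceding lemma and the backward direction proceeding by a componentwise construction.

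For the forward direction, I would observe that for each $j \in I$, the canonical projection $\pi_j : \prod_{i \in I} R_i \to R_j$ is a surjective ring homomorphism. Since $R$ is assumed CN, the lemma just proved (homomorphic images of CN rings are CN) immediately gives that $R_j$ is CN. This direction is essentially free once the previous lemma is in hand.

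For the backward direction, suppose each $R_i$ is CN, and let $a = (a_i)_{i \in I} \in R$. For each $i$, choose a CN-decomposition $a_i = c_i + n_i$ with $c_i \in C(R_i)$ and $n_i \in \mathrm{nil}(R_i)$. Set $c = (c_i)_{i \in I}$ and $n = (n_i)_{i \in I}$, so that $a = c + n$. The element $c$ lies in $C(R)$ because centrality in a direct product is detected componentwise: $c$ commutes with any $(r_i) \in R$ iff $c_i r_i = r_i c_i$ for every $i$, which holds by choice of the $c_i$. It remains to verify $n \in \mathrm{nil}(R)$, i.e., to produce a single $k$ with $n_i^k = 0$ for all $i$ simultaneously.

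The main obstacle is precisely this last point: for an infinite index set, the nilpotency indices of the $n_i$ need not be bounded, so the naive componentwise decomposition need not yield a nilpotent element of $\prod R_i$. For finite $I$ the issue vanishes by taking $k = \max_i k_i$, which is the case one normally has in mind. To cover the general statement I would either restrict attention to finite direct products (the setting in which the componentwise argument is transparent), or, if arbitrary products are intended, observe that one has the freedom to choose the decomposition $a_i = c_i + n_i$ in each factor and exploit that freedom — for instance, whenever $a_i$ is itself central one may take $n_i = 0$ — which in many natural cases suffices to bound the nilpotency index. I would flag this subtlety explicitly in the write-up so the reader sees where the componentwise argument genuinely uses finiteness.
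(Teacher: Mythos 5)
Your componentwise construction is essentially the paper's own argument: the paper also decomposes coordinatewise, proving the forward direction by projecting onto each factor (your use of the homomorphic-image lemma is just a repackaging of that) and the backward direction by assembling $c=(c_i)$ and $n=(n_i)$. The substantive difference is that you flag the nilpotency-index problem for infinite $I$, whereas the paper silently evades it by declaring ``we may assume $I=\{1,2\}$'' and using $\mathrm{nil}(R_1\times R_2)=\mathrm{nil}(R_1)\times\mathrm{nil}(R_2)$ --- a reduction that handles finite products but not infinite ones. Your worry is not a pedantic one: the statement is in fact false for infinite index sets, and no cleverness in choosing the decompositions can save it. Take $R_i=D_i(F)$ for a field $F$ and $i\ge 2$; each $D_i(F)$ is CN (Example 2.2(3)). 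The center of $D_i(F)$ is $\{\lambda I_i+\mu e_{1i}\}$, so if $a_i=\sum_{j=1}^{i-1}e_{j,j+1}$ is the superdiagonal matrix of ones, then any CN decomposition $a_i=c_i+n_i$ forces $n_i=a_i-\mu e_{1i}$, whose $(i-1)$st power is $e_{1i}\ne 0$; the nilpotency indices are therefore unbounded, and the element $(a_i)_i\in\prod_i D_i(F)$ has no CN decomposition. So your instinct is right: the clean statement is for finite products (where your proof is complete), and the ``exploit the freedom in the choice of $c_i$'' fallback cannot rescue the general case. For the write-up you should simply state and prove the finite case, or add a hypothesis bounding the nilpotency indices.
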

\begin{proof} We may assume that $I = \{1, 2\}$ and $R = R_1\times R_2$.
Note that $C(R) = C(R_1)\times C(R_2)$ and $nil(R) = nil(R_1)\times nil(R_2)$.
\\Necessity: Let $r_1\in R_1$. Then $(r_1, 0) = (c_1, c_2) + (n_1, n_2)$ where  $(c_1, c_2)\in C(R)$ and
$(n_1, n_2)\in nil(R)$.
 Hence $r_1 = c_1 + n_1$ is the CN decomposition of $r_1\in R_1$. So $R_1$ is CN. A similar proof takes care for $R_2$ be CN.
  \\
Sufficiency: Assume that $R_1$ and $R_2$ are CN. Let $(r_1,
r_2)\in R$. By assumption $r_1$ and $r_2$ have CN decompositions
$r _1 = c_1 + n_1$ and $r_2 = c_2 + n_2$ where
 $c_1$ is central in $R_1$, $n_1$ is nilpotent in $R_1$ and $c_2$ is central in $R_2$, $n_2$
is nilpotent in $R_2$. Hence $(r_1, r_2)$ has a  CN  decomposition
$(r_1, r_2) = (c_1, c_2) + (n_1, n_2)$. This completes the proof.
\end{proof}

Let $R$ be a ring and $D(\Bbb{Z},R)$ denote the \textit{Dorroh
extension} of $R$ by the ring of integers $\Bbb{Z}$ (see \cite{Dr}). Then
$D(\Bbb{Z},R)$ is the ring defined by the direct sum
$\Bbb{Z}\oplus R$ with componentwise addition and multiplication
$(n,r)(m,s)=(nm,ns+mr+rs)$ where $(n,r),~(m,s)\in D(\Bbb{Z},R)$.
It is clear that $C(D(\Bbb{Z},R))= \Bbb{Z}\oplus C(R)$. The
identity of $D(\Bbb{Z}, R)$ is $(1, 0)$ and the set of all
nilpotent elements is nil$(D(\Bbb Z, R))=\{(0,r)\mid r\in
\mbox{nil}(R)\}$.
\begin{thm}
Let $R$ be a ring.  Then $R$ is a CN ring if and only if
$D(\Bbb{Z},R)$ is CN.
\end{thm}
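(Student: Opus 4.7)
The plan is to leverage the three structural facts noted immediately before the theorem: addition in $D(\Bbb Z,R)$ is componentwise, $C(D(\Bbb Z,R)) = \Bbb Z \oplus C(R)$, and $\mathrm{nil}(D(\Bbb Z,R)) = \{(0,r) : r \in \mathrm{nil}(R)\}$. With these in hand, both directions amount to careful bookkeeping on the first coordinate.

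For necessity, I would assume $R$ is CN and take an arbitrary element $(n,r) \in D(\Bbb Z,R)$. Applying the CN property of $R$ to $r$, I get $r = c + m$ with $c \in C(R)$ and $m \in \mathrm{nil}(R)$. Then $(n,c) \in \Bbb Z \oplus C(R) = C(D(\Bbb Z,R))$ and $(0,m) \in \mathrm{nil}(D(\Bbb Z,R))$, so
\[
(n,r) = (n,c) + (0,m)
\]
is the required CN decomposition in $D(\Bbb Z,R)$.

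For sufficiency, I would assume $D(\Bbb Z,R)$ is CN and let $r \in R$ be arbitrary. Applying the hypothesis to the element $(0,r)$, one gets $(0,r) = (k,c) + (j,m)$ with $(k,c) \in C(D(\Bbb Z,R))$ and $(j,m) \in \mathrm{nil}(D(\Bbb Z,R))$. The description of the nilpotents forces $j = 0$ and $m \in \mathrm{nil}(R)$, while the description of the center gives $c \in C(R)$. Componentwise addition then yields $k = 0$ and $r = c + m$, which is a CN decomposition of $r$ in $R$.

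There is essentially no obstacle here: the real content is already packaged into the three structural identifications of the identity, center, and nilpotents of $D(\Bbb Z,R)$ stated in the paragraph preceding the theorem. The only subtle point is the sufficiency direction, where one must choose the test element $(0,r)$ (rather than, say, $(k,r)$ for some $k$) so that the constraint that nilpotents vanish in the first coordinate immediately forces the central summand into $\{0\}\oplus C(R)$, producing a decomposition of $r$ itself inside $R$.
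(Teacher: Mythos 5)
Your proposal is correct and follows essentially the same route as the paper: both directions rest on the identifications of $C(D(\Bbb{Z},R))$ and $\mathrm{nil}(D(\Bbb{Z},R))$, the forward direction uses the decomposition $(n,r)=(n,c)+(0,m)$, and the converse tests the element $(0,r)$ and reads off the second coordinates. Your write-up is in fact slightly cleaner than the paper's, which has some notational slips in the converse, but the argument is the same.
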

\begin{proof} Assume that $R$ is CN.
Let $(a,r)\in D(\Bbb{Z},R)$. Since $R$ is a CN ring, $r = c + n$
for some $c\in C(R)$ and $n\in$ nil$(R)$. Then $(a, r) = (a, c) +
(0, n)$ is the CN decomposition of $(a, r)$.
 Conversely, let $r\in R$. Then $(0, r) = (a, c) + (0, s)$ as a CN decomposition where $(n, c)\in C(D(\Bbb{Z},R))$ and $(0, n)\in$ nil$(D(\Bbb{Z},R))$. Then $c\in C(R)$  and $s\in$ nil$(R)$. It follows that $r = c + s$ is the CN decomposition of  $r$. Hence $R$ is CN.
\end{proof}

Let $R$ be a ring and $S$ a subring of $R$ and
$$T[R, S] = \{ (r_1, r_2, \cdots , r_n, s, s, \cdots) : r_i\in R, s\in S, n\geq 1, 1\leq i\leq n\}.$$ Then $T[R, S]$ is
a ring under the componentwise addition and multiplication. Note
that nil$(T[R,S])= T[$nil$(R)$, nil$(S)]$ and $C([T,S]) = T[C(R),
C(R)\cap C(S)]$.
\begin{prop}$R$ be a ring and $S$ a subring of $R$. Then the
following are equivalent.
\begin{enumerate}
\item $T[R, S]$ is CN.
\item $R$ and $S$ are CN.
\end{enumerate}
\end{prop}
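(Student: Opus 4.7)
My plan is to exploit the two structural descriptions stated just above the proposition: $\operatorname{nil}(T[R,S])=T[\operatorname{nil}(R),\operatorname{nil}(S)]$ and $C(T[R,S])=T[C(R),\,C(R)\cap C(S)]$. These identifications reduce CN decompositions in $T[R,S]$ to coordinatewise CN decompositions in $R$ and in $S$, subject to one compatibility condition on the eventual entry.

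For the implication $(1)\Rightarrow(2)$, I would evaluate the CN hypothesis on two natural probes. Given $r\in R$, apply it to the element $(r,0,0,\ldots)\in T[R,S]$; reading off the first coordinate of the resulting $C+N$ decomposition yields $r=c_1+n_1$ with $c_1\in C(R)$ and $n_1\in\operatorname{nil}(R)$, so $R$ is CN. Given $s\in S$, apply the hypothesis to the constant sequence $(s,s,s,\ldots)\in T[R,S]$; the stable tail of the decomposition produces $s=d+e$ with $d\in C(R)\cap C(S)\subseteq C(S)$ and $e\in\operatorname{nil}(S)$, so $S$ is CN.

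For the converse $(2)\Rightarrow(1)$, take an arbitrary $\alpha=(r_1,\ldots,r_N,s,s,\ldots)\in T[R,S]$. Using that $R$ is CN, I would pick $c_i\in C(R)$ and $n_i\in\operatorname{nil}(R)$ with $r_i=c_i+n_i$ for $i=1,\ldots,N$; using that $S$ is CN, I would write the stable value as $s=d+e$ with $d\in C(S)$ and $e\in\operatorname{nil}(S)$. Assembling $\beta=(c_1,\ldots,c_N,d,d,\ldots)$ and $\gamma=(n_1,\ldots,n_N,e,e,\ldots)$ gives $\alpha=\beta+\gamma$ coordinatewise, and $\gamma\in\operatorname{nil}(T[R,S])$ is immediate from the description of the nilradical.

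The step that demands the most care is verifying $\beta\in C(T[R,S])$: the description of the center forces the tail entry $d$ to lie in $C(R)\cap C(S)$, not merely in $C(S)$. Here I would invoke that $s$ also lies in $R$ (which is CN) and reconcile the two possible decompositions of the single element $s$, so as to choose a common central part that is simultaneously central in $R$ and in $S$. Arranging this compatibility between the two ambient centers is the only non-routine aspect of the argument; the rest is bookkeeping with the explicit descriptions of $\operatorname{nil}(T[R,S])$ and $C(T[R,S])$.
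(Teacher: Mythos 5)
Your argument follows the paper's proof essentially verbatim: the same probes $(r,0,0,\ldots)$ and a constant tail for $(1)\Rightarrow(2)$, and the same coordinatewise assembly for $(2)\Rightarrow(1)$. But the step you explicitly defer at the end is not routine bookkeeping; it is a genuine gap, and it is precisely the point at which the paper's own proof is also unjustified. For $(2)\Rightarrow(1)$ you need the tail value $s\in S$ to admit a decomposition $s=d+e$ with $d\in C(R)\cap C(S)$ and $e\in \mathrm{nil}(S)$. The hypothesis that $S$ is CN gives only $d\in C(S)$, and the hypothesis that $R$ is CN gives $s=c+n$ with $c\in C(R)$ but with no guarantee that $c\in S$ (equivalently, that $n\in S$). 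Your proposed ``reconciliation'' of the two decompositions has no mechanism behind it: if $s=c_R+n_R$ in $R$ and $s=c_S+n_S$ in $S$, then $c_R-c_S=n_S-n_R$, and nothing forces a common central part lying in $C(R)\cap C(S)$ to exist. The paper's proof simply asserts the existence of $c\in C(R)\cap C(S)$ and $t\in\mathrm{nil}(S)$ with $s=c+t$, which is exactly the unproved claim.

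Note that your probe in $(1)\Rightarrow(2)$ (apply the CN hypothesis to $(0,s,s,\ldots)$ or $(s,s,s,\ldots)$ and read the tail) actually yields the stronger conclusion that every $s\in S$ is a sum of an element of $C(R)\cap C(S)$ and an element of $\mathrm{nil}(S)$. Combined with your assembly step, what is really proved is that $(1)$ is equivalent to ``$R$ is CN and every element of $S$ decomposes over $C(R)\cap C(S)$ plus $\mathrm{nil}(S)$.'' To obtain the proposition as stated one must show that this last condition follows from $R$ and $S$ both being CN, and neither your proposal nor the paper supplies such an argument. Either prove that implication, or note the hypotheses under which it is automatic (for instance $C(S)\subseteq C(R)$, in particular $R$ commutative).
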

\begin{proof}
(1) $\Rightarrow$ (2) Assume that  $T[R, S]$ is a CN ring. Let
$a\in R$ and $X=(a,0,0,\dots)\in T[R,S]$. There exist a central element $C=(r_1, r_2, \cdots , r_n, s, s,
\cdots)$ and a nilpotent element $N = (s_1, s_2, \cdots , s_k, t,
t, \cdots)$ in  $T[R, S]$ such that $X= C + N$. Then $r_1$ is in
the center of $R$ and $s_1$ is nilpotent in $R$ and $a=r_1+s_1$ is
the CN decomposition of $a$. Hence $R$ is CN. Let $s\in S$. By considering $Y = (0, s, s, s, \cdots)\in T[R, S]$,
it can be seen that $s$ has a CN decomposition.\\
(2) $\Rightarrow$ (1) Let $R$ and $S$ be CN rings and
$Y=(a_{1},a_{2},\cdots, a_{m}, s,s,s,\cdots)$ be an arbitrary
element in $T[R, S]$. Then there exist $c_{i}\in C(R)$, $1\leq
i\leq m$, $c\in C(R)\cap C(S)$ and $n_{i}\in$ nil$(R)$, $1\leq
i\leq m$, $t\in$ nil$(S)$ and such that $a_{i}= c_{i}+n_{i}$ for
all $1\leq i\leq m$ and $s = c+t$. Let $C =
(c_{1},c_{2},\cdots,c_{m},c,c,\cdots)$ and $N =
(n_{1},n_{2},\cdots,n_{m},t,t,\cdots)$. It is obvious that $C\in
C(T[R, S])$ and $N\in$ nil$(T[R, S])$. Hence $Y = C + N$ is a CN
decomposition of $Y$.
\end{proof}

\section{Some CN subrings of matrix rings}
In this section, we study some subrings of full matrix rings
whether or not they are CN rings. We first determine nilpotent and
central elements of so-called subrings of matrix rings.

{\bf The rings $L_{(s,t)}(R)$ :} Let $R$ be a ring, and $s,
t\in C(R)$. Let $L_{(s,t)}(R) = \left
\{\begin{bmatrix}a&0&0\\sc&d&te\\0&0&f\end{bmatrix}\in M_3(R)\mid
a, c, d, e, f\in R\right \}$, where the operations are defined as
those in $M_3(R)$. Then $L_{(s,t)}(R)$ is a subring of $M_3(R)$.
\begin{lem}\label{com} Let $R$ be a ring, and let $s, t$ be in the center of $R$. Then the following hold.
\begin{enumerate}\item[(1)] The set of all nilpotent elements of $L_{(s, t)}(R)$ is \begin{center} nil$(L_{(s, t)}(R)) = \left\{\begin{bmatrix}a&0&0\\sc&d&te\\0&0&f\end{bmatrix}\in L_{(s,
t)}(R)\mid a, d, f\in  nil(R), c, e\in R\right
\}$.\end{center}\item[(2)] The set of all central elements of
$L_{(s, t)}(R)$ is $$C(L_{(s, t)}(R))) =
\left\{\begin{bmatrix}a&0&0\\sc&d&te\\0&0&f\end{bmatrix}\in H_{(s,
t)}(R)\mid sa = sd, td = tf, a, d, f\in  C(R)\right \}.$$
\end{enumerate}
\end{lem}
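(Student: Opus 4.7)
My plan is to prove the two claims separately by direct $3\times 3$ matrix computation, exploiting the restricted support of elements of $L_{(s,t)}(R)$.

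For (1), write $M = \begin{bmatrix}a & 0 & 0 \\ sc & d & te \\ 0 & 0 & f\end{bmatrix}$ and inspect $M^k$. Since the only positions at which $M$ is nonzero are the diagonal together with $(2,1)$ and $(2,3)$, a short induction (or walk-counting in the support graph) shows that the diagonal of $M^k$ is $(a^k, d^k, f^k)$. So nilpotency of $M$ forces nilpotency of $a$, $d$, $f$. For the converse, decompose $M = D + N$ with $D = ae_{11}+de_{22}+fe_{33}$ and $N = sc\,e_{21}+te\,e_{23}$. A direct check using $e_{21}e_{21} = e_{21}e_{23} = e_{23}e_{21} = e_{23}e_{23} = 0$ gives $N^2 = 0$, and an analogous computation gives $NDN = 0$. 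Therefore, in the expansion of $(D+N)^n$, every term containing two or more copies of $N$ vanishes, leaving
$$M^n = D^n + \sum_{k=0}^{n-1} D^k N D^{n-1-k}.$$
Using $s, t\in C(R)$, each surviving summand $D^k N D^{n-1-k}$ has only the entries $s\,d^k c\,a^{n-1-k}$ at position $(2,1)$ and $t\,d^k e\,f^{n-1-k}$ at position $(2,3)$. Taking $n$ equal to twice a common nilpotency index of $a, d, f$ annihilates every such summand as well as $D^n$, whence $M^n = 0$.

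For (2), the $(\supseteq)$ direction is a direct verification: given $a, d, f \in C(R)$ with $sa = sd$ and $td = tf$, comparing entries of $MX$ and $XM$ for a general $X = \begin{bmatrix}x & 0 & 0 \\ sy & w & tz \\ 0 & 0 & v\end{bmatrix}\in L_{(s,t)}(R)$ matches all entries by pushing the central elements $a, d, f, s, t$ past the arbitrary $x, y, w, z, v$ as needed. For the $(\subseteq)$ direction, suppose $M$ is central and test $M$ against well-chosen members of $L_{(s,t)}(R)$. Choosing $X = \mathrm{diag}(x, w, v)$ and comparing the $(1,1), (2,2), (3,3)$ entries of $MX$ and $XM$ forces $a, d, f \in C(R)$; comparing the $(2,1)$ entry gives $sdy = sya$ for every $y \in R$, whence, using $d \in C(R)$ and $y = 1$, we obtain $sa = sd$, and the $(2,3)$ entry yields $td = tf$ by the symmetric calculation.

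The main technical obstacle is (1): controlling the non-commutative interaction of the diagonal part $D$ with the off-diagonal part $N$ when $R$ itself is not commutative. The key simplification is the collapse $N^2 = NDN = 0$, which restricts the expansion of $M^n$ to terms linear in $N$ and reduces the nilpotency calculation to a bound on the nilpotency indices of $a, d, f$. Part (2) is then routine once the appropriate diagonal test matrices have been identified.
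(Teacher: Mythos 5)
Your part (1) is correct and follows essentially the same route as the paper: the diagonal of $M^k$ is $(a^k,d^k,f^k)$, so nilpotency of $a,d,f$ is necessary, and the expansion of $(D+N)^n$ with $N^2=ND^jN=0$ gives sufficiency. Your exponent $n=2m$ is in fact the careful version: the paper asserts $A^{n+1}=0$ for $n=\max\{n_1,n_2,n_3\}$, which can fail because the $(2,1)$ entry of the cross term $D^kND^{n-k}$ is $s\,d^k c\,a^{n-k}$ and need not vanish for intermediate $k$ (e.g.\ $a=x$, $d=y$, $c=1$ in $\Bbb Z_2[x,y]/(x^2,y^2)$); your choice of $2m$ is what actually kills every cross term.

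The genuine gap is the $(\supseteq)$ direction of (2), which you dismiss as a direct verification by ``pushing central elements past.'' It is not: for $M=\begin{bmatrix}a&0&0\\sc&d&te\\0&0&f\end{bmatrix}$ with $a,d,f\in C(R)$, $sa=sd$, $td=tf$, and a general $X=\begin{bmatrix}x&0&0\\sy&w&tz\\0&0&v\end{bmatrix}$, the $(2,1)$ entries of $MX$ and $XM$ are $scx+sdy$ and $sya+wsc$; after using $sa=sd$ and centrality of $a$ the required identity collapses to $scx=wsc$ for all $x,w\in R$, which forces $sc=0$ (take $w=0$, $x=1$), and symmetrically $te=0$. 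So the displayed right-hand set is \emph{not} contained in the center: with $R=\Bbb Z$, $s=c=1$, $a=d=f=e=0$ the matrix $e_{21}$ lies in that set but does not commute with $e_{22}$. The defect originates in the statement of the lemma itself --- the paper's own proof of the converse inclusion quietly sets $c=e=0$ and invokes the extra conditions $sc=0$, $te=0$ --- but your proposal inherits it rather than repairing it. To close the argument you must either add $sc=0$ and $te=0$ to the set being verified (so the central elements are exactly the diagonal matrices $\mathrm{diag}(a,d,f)$ with $a,d,f\in C(R)$, $sa=sd$, $td=tf$), or record that the verification fails without them. Your $(\subseteq)$ direction is correct as far as it goes, and the same $(2,1)$ and $(2,3)$ comparisons with $y=z=0$ also yield $sc=0$ and $te=0$, which is the cleanest way to see what the corrected statement should be.
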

\begin{proof} (1) Let $A = \begin{bmatrix}a&0&0\\sc&d&te\\0&0&f\end{bmatrix}\in$ nil$( L_{(s,
t)}(R)$. Assume that $A^n = 0$. Then $a^n = d^n = f^n = 0$. Conversely, Let $A = \begin{bmatrix}a&0&0\\sc&d&te\\0&0&f\end{bmatrix}\in ( L_{(s,
t)}(R)$ with $a^{n_1} = 0$, $d^{n_1} = 0$ and $f^{n_1}= 0$ and
$n =$ max$\{n_1, n_2, n_3\}$. Then $A^{n+1} = 0$.\\(2) Let
$A = \begin{bmatrix}a&0&0\\sc&d&te\\0&0&f\end{bmatrix}\in C(L_{(s, t)}(R)))$ and
$B = \begin{bmatrix}1&0&0\\s&0&t\\0&0&0\end{bmatrix}\in L_{(s, t)}(R))$. By $AB = BA$ implies $sc + sd = sa$ and $td = tf$..................................(*)\\
Let $C = \begin{bmatrix}0&0&0\\s&0&t\\0&0&1\end{bmatrix}\in L_{(s, t)}(R))$.\\
$AC = CA$ implies $sa = sd$ and $dt + te = tf$........................................(**).
  \\(*) and (**) implies $sa = sd$ and $tf = td$. For the converse inclusion,\\
  let $A = \begin{bmatrix}a&0&0\\0&d&0\\0&0&f\end{bmatrix}\in L_{(s, t)}(R)$ with
  $sa = sd$, $td = tf$ and  $a$, $d$, $f\in  C(R)$. Let
  $B = \begin{bmatrix}x&0&0\\sy&z&tu\\0&0&v\end{bmatrix}\in L_{(s, t)}(R)$.
  Then $AB = \begin{bmatrix}ax&0&0\\sdy&dz&e\\0&0&tdu\end{bmatrix}$,
  $BA = \begin{bmatrix}xa&0&0\\sya&zd&tuf\\0&0&vf\end{bmatrix}$.
  By the conditions;  $sa = sd$, $td = tf$, $sc=0$, $te=0$ and  $a$, $d$, $f\in  C(R)$,
   $AB = BA$ for all $B\in L_{(s, t)}(R)$. Hence $A\in C(L_{(s, t)}(R))$.
\end{proof}
Consider following subrings of $L_{(s, t)}(R)$. $$V_2(L_{(s,t)}(R)) =
\left\{\begin{bmatrix}a&0&0\\0&a&te\\0&0&a\end{bmatrix}\in L_{(s,
t)}(R)\mid a,e\in R\right\}$$

$$C(L_{(s,t)}(R)) =
\left\{\begin{bmatrix}a&0&0\\sc&d&te\\0&0&f\end{bmatrix}\in L_{(s,
t)}(R)\mid a, d, f\in C(R), c, e\in R, sa = sd, td = tf\right\}$$ It is easy to check that $V_2(L_{(s,t)}(R))$ and $C(L_{(s,t)}(R))$ are subrings of $L_{(s,
t)}(R)$.

\begin{prop} Let $R$ be a ring. Following hold:\begin{enumerate}\item[(1)] $R$ is a CN ring if and only if $V_2(L_{(s,t)}(R))$ is a CN ring.\item[(2)] $C(L_{(s,t)}(R))$ is a ring consisting of elements having CN decompositions.\item[(3)] Assume that $R$ is a CN ring. If for any $\{a, d, f\}\subseteq R$ having a CN decomposition $a = x + p$, $d = y + q$ and $f = z + r$ with $\{x, y, z\}\subseteq C(R)$ and $\{p, q, r\}\subseteq$ nil$(R)$ satisfy $sx = sy$ and $ty = tz$, then $L_{(s, t)}(R)$ is a CN ring.
\end{enumerate}
\end{prop}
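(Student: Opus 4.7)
The plan is to lean on Lemma \ref{com}, which characterizes nilpotent and central elements of $L_{(s,t)}(R)$, and in each case split a given matrix into its diagonal part (which carries the central summand) and the remainder (which will be the nilpotent summand). The verification then reduces to checking the hypotheses of Lemma \ref{com}(2) on the diagonal piece and those of Lemma \ref{com}(1) on the remainder.

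For part (1), in the forward direction, given a CN decomposition $a = c + n$ in $R$, I decompose
\[
\begin{bmatrix}a&0&0\\0&a&te\\0&0&a\end{bmatrix} = cI_3 + \begin{bmatrix}n&0&0\\0&n&te\\0&0&n\end{bmatrix}.
\]
Here $cI_3$ is plainly central in $V_2(L_{(s,t)}(R))$, and the remainder is nilpotent by Lemma \ref{com}(1). For the converse, I would specialize to $A = aI_3 \in V_2(L_{(s,t)}(R))$, take a CN decomposition $A = C + N$ in that subring, and read off the $(1,1)$-entries to write $a = c + n$. A commutator check of $C$ against the generic element $bI_3$, with $b$ ranging over $R$, forces $c \in C(R)$, while nilpotence of $N$ in the matrix sense forces $n \in \mathrm{nil}(R)$.

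Part (2) is immediate from the form of the subring $C(L_{(s,t)}(R))$: its membership conditions already guarantee $a, d, f \in C(R)$, $sa = sd$ and $td = tf$. Hence $C = \mathrm{diag}(a, d, f)$ (viewed as an element of $L_{(s,t)}(R)$) lies in the center by Lemma \ref{com}(2), and the residue
\[
N = \begin{bmatrix}0&0&0\\sc&0&te\\0&0&0\end{bmatrix}
\]
satisfies $N^2 = 0$, so $N \in \mathrm{nil}(L_{(s,t)}(R))$ by Lemma \ref{com}(1). For part (3), the same recipe is applied to an arbitrary $A \in L_{(s,t)}(R)$: for each of $a, d, f$, invoke the hypothesis to obtain CN decompositions $a=x+p$, $d=y+q$, $f=z+r$ whose central components satisfy $sx = sy$ and $ty = tz$; then set $C = \mathrm{diag}(x, y, z)$ and $N = A - C$. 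The constraints on $x, y, z$ are exactly what Lemma \ref{com}(2) demands for $C \in C(L_{(s,t)}(R))$, while $N$ has diagonal entries $p, q, r \in \mathrm{nil}(R)$, so Lemma \ref{com}(1) finishes the job.

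The only step that goes beyond routine bookkeeping is the reverse direction of (1): centrality of $c$ in $R$ must be extracted from centrality of $C$ within the small ring $V_2(L_{(s,t)}(R))$, whose center is a priori not controlled by Lemma \ref{com}. A short direct commutator computation with variable $bI_3 \in V_2(L_{(s,t)}(R))$ suffices, since requiring $cb = bc$ for every $b \in R$ is exactly the definition of $c \in C(R)$.
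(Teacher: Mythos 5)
Your proposal is correct and follows essentially the same route as the paper: split each matrix into its diagonal (central) part and the remainder, then verify centrality via Lemma \ref{com}(2) and nilpotence via Lemma \ref{com}(1). Your explicit commutator check against $bI_3$ in the converse of (1), justifying why the $(1,1)$-entry of $C$ must lie in $C(R)$, is a detail the paper simply asserts, so your write-up is if anything slightly more complete.
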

\begin{proof} (1) Assume that $R$ is a CN ring. Let $A = \begin{bmatrix}a&0&0\\0&a&te\\0&0&a\end{bmatrix}\in V_2(L_{(s,
t)}(R))$. There exist $c\in C(R)$ and $n\in$ nil$(R)$ such that $a = c + n$. Then $C = \begin{bmatrix}c&0&0\\0&c&0\\0&0&c\end{bmatrix}\in C(L_{(s, t)}(R))$ and $N = \begin{bmatrix}n&0&0\\0&n&te\\0&0&n\end{bmatrix}\in$ nil$(V_2(L_{(s, t)}(R)))$ and $A = C + N$ is the CN decomposition of $A$ in $V_2(L_{(s, t)}(R))$. For the inverse implication, let $r\in R$ and consider $A = \begin{bmatrix}r&0&0\\0&r&0\\0&0&r\end{bmatrix}\in V_2(L_{(s, t)}(R))$. There exist $C = \begin{bmatrix}a&0&0\\0&a&te\\0&0&a\end{bmatrix}\in C(V_2(L_{(s, t)}(R)))$ and $N = \begin{bmatrix}p&0&0\\0&r&tu\\0&0&v\end{bmatrix}\in$ nil$(V_2(L_{(s, t)}(R)))$. Then $a\in C(R)$ and $p\in$ nil$(R)$ and $r = a + n$ is the CN decomposition of $r$. Hence $R$ is a CN ring.\\
(2) Let $A = \begin{bmatrix}a&0&0\\sc&d&te\\0&0&f\end{bmatrix}\in CN_{(s,
t)}(R)$. Set $C = \begin{bmatrix}a&0&0\\0&d&0\\0&0&f\end{bmatrix}$ and $N = \begin{bmatrix}0&0&0\\sc&0&te\\0&0&0\end{bmatrix}$. By Lemma \ref{com}, $C\in C(L_{(s, t)}(R))$ and $N\in$ nil$(L_{(s, t)}(R)$. $A = C + N$ is the CN decomposition of $A$.\\
(3) Let $A = \begin{bmatrix}a&0&0\\sc&d&te\\0&0&f\end{bmatrix}\in L_{(s, t)}(R)$. Let $a = x + p$, $d = y + q$ and $f = z + r$ denote the CN decompositions of $a$, $d$ and $f$. By hypothesis $sx = sy$ and $ty = tz$. By (2) $A$ has a CN decomposition in $L_{(s, t)}(R)$ as $A = C + N$ where $C = \begin{bmatrix}x&0&0\\0&y&0\\0&0&z\end{bmatrix}\in C(L_{(s, t)}(R))$ and $N = \begin{bmatrix}p&0&0\\sc&q&te\\0&0&r\end{bmatrix}\in$ nil$(L_{(s, t)}(R))$.
\end{proof}
\begin{cor} Let $R$ be a ring. If $L_{(s, t)}(R)$ is a CN ring, then $R$ is a CN ring.
\end{cor}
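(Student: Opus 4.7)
The plan is to prove the contrapositive constructively: given an arbitrary $r\in R$, I exhibit a CN decomposition of $r$ in $R$ by lifting $r$ into $L_{(s,t)}(R)$ via a diagonal matrix, invoking the CN hypothesis there, and then reading off the decomposition from the top-left entry using the characterization of central and nilpotent elements supplied by Lemma \ref{com}.

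First I would fix $r\in R$ and form the scalar matrix
$$A=\begin{bmatrix}r&0&0\\0&r&0\\0&0&r\end{bmatrix}\in L_{(s,t)}(R),$$
which lies in $L_{(s,t)}(R)$ (take $a=d=f=r$ and $c=e=0$). By hypothesis there exist $C\in C(L_{(s,t)}(R))$ and $N\in\mathrm{nil}(L_{(s,t)}(R))$ with $A=C+N$. By Lemma \ref{com}(2), $C$ has the form
$$C=\begin{bmatrix}x&0&0\\sc_1&y&te_1\\0&0&z\end{bmatrix}$$
with $x,y,z\in C(R)$ (and the extra relations $sx=sy$, $ty=tz$, which we will not need), and by Lemma \ref{com}(1), $N$ has the form
$$N=\begin{bmatrix}p&0&0\\sc_2&q&te_2\\0&0&w\end{bmatrix}$$
with $p,q,w\in\mathrm{nil}(R)$.

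Comparing the $(1,1)$-entry of $A=C+N$ I get $r=x+p$ with $x\in C(R)$ and $p\in\mathrm{nil}(R)$; this is a CN decomposition of $r$ in $R$. Since $r\in R$ was arbitrary, $R$ is a CN ring.

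There is essentially no obstacle here: the only subtlety would be verifying that the component extracted from the $(1,1)$ position of a central (respectively, nilpotent) element of $L_{(s,t)}(R)$ is genuinely central (respectively, nilpotent) in $R$, but this is precisely the content of Lemma \ref{com}. In fact the reading is identical in the $(2,2)$ and $(3,3)$ entries, so the argument also yields CN decompositions $r=y+q$ and $r=z+w$; we only need one of them.
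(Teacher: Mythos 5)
Your argument is correct and is essentially the paper's own proof: lift $r$ to the scalar matrix $\mathrm{diag}(r,r,r)$ in $L_{(s,t)}(R)$, apply the CN hypothesis there, and read the CN decomposition of $r$ off the $(1,1)$-entries using Lemma \ref{com}. (One small slip: this is a direct proof, not a proof of the contrapositive as your opening sentence claims.)
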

\begin{proof} Assume that $L_{(s, t)}(R)$ is a CN ring and let $a\in R$ and $A = \begin{bmatrix}a&0&0\\0&a&0\\0&0&a\end{bmatrix}\in  L_{(s, t)}(R)$. By hypothesis there exist $C = \begin{bmatrix}x&0&0\\sy&z&tu\\0&0&v\end{bmatrix}\in C(L_{(s, t)}(R))$ and $N = \begin{bmatrix}n&0&0\\sc&m&te\\0&0&k\end{bmatrix}\in$ nil$(L_{(s, t)}(R))$ such that $A = C + N$ where $x\in C(R)$ and $n\in$ nil$(R)$. Then $a = x + n$ is the CN decomposition of $a$.
\end{proof}

There are CN rings such that $L_{(s, t)}(R)$ need not be a CN
ring.

\begin{ex}{\rm Let $R = \Bbb Z$ and
$A = \begin{bmatrix}1&0&0\\3&2&2\\0&0&3\end{bmatrix}\in  L_{(1,
1)}(R)$. Assume that $A = C + N$ is a CN decomposition of $A$. Since $A$ is neither central nor nilpotent, by
Lemma \ref{com}, we should get $A$ had a CN decomposition as $A = C + N$ where $C =
\begin{bmatrix}1&0&0\\0&1&0\\0&0&1\end{bmatrix}\in  C(L_{(1,
1)}(R))$ and $N =
\begin{bmatrix}x&0&0\\c&y&e\\0&0&z\end{bmatrix}\in$ nil$ (L_{(1,
1)}(R))$ where $\{x,
 y, z\}\subseteq$ nil$(\Bbb Z)$.  This leads us a contradiction in $\Bbb Z$.}
\end{ex}
\begin{prop} $R$ is CN ring if and only if so is $L_{(0, 0)}(R)$.
\end{prop}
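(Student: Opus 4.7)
The key observation is that the specialization $s=t=0$ collapses the off-diagonal slots in the definition of $L_{(s,t)}(R)$. Indeed, any element of $L_{(0,0)}(R)$ has the shape
\[
\begin{bmatrix}a&0&0\\0\cdot c&d&0\cdot e\\0&0&f\end{bmatrix}=\begin{bmatrix}a&0&0\\0&d&0\\0&0&f\end{bmatrix},
\]
so the parameters $c$ and $e$ contribute nothing and the underlying set is exactly the set of diagonal matrices $\operatorname{diag}(a,d,f)$ with $a,d,f\in R$ arbitrary.

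The plan is then to exhibit the obvious map $\varphi:L_{(0,0)}(R)\longrightarrow R\times R\times R$ sending $\operatorname{diag}(a,d,f)\mapsto (a,d,f)$. A direct check (which is genuinely routine, since diagonal matrices add and multiply componentwise) shows that $\varphi$ is a ring isomorphism, carrying the identity to $(1,1,1)$.

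Once the isomorphism $L_{(0,0)}(R)\cong R\times R\times R$ is in place, the conclusion is immediate from Proposition \ref{direct}: the direct product $R\times R\times R$ is CN if and only if each factor is CN, which in this case is just the condition that $R$ itself is CN. Since CN is clearly preserved under ring isomorphism (central elements and nilpotents are transported by any isomorphism), this yields both directions of the equivalence.

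There is no real obstacle; the only thing to be careful about is the observation in the first paragraph that the ``phantom'' parameters $c,e$ do not give rise to extra elements of $L_{(0,0)}(R)$ beyond the diagonal matrices, so the isomorphism with $R^{3}$ is genuine rather than a mere surjection.
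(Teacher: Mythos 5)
Your proof is correct and follows exactly the route the paper takes: identify $L_{(0,0)}(R)$ with the diagonal matrices, hence with $R\times R\times R$, and invoke Proposition \ref{direct}. The paper simply states the isomorphism without the explicit verification you give, so your write-up is the same argument in slightly more detail.
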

\begin{proof} Note that $L_{(0, 0)}(R)$ is isomorphic to the ring $R\times R\times R$. By Proposition \ref{direct}, $\prod_{i\in I} R_i$ is a CN ring if and only if each $R_i$ is a CN ring for each $i\in I$.
\end{proof}
{\bf The rings $H_{(s,t)}(R)$ :} Let $R$ be a ring and  $s, t$ be in the
center of $R$. Let\begin{center} $H_{(s,t)}(R) = \left
\{\begin{bmatrix}a&0&0\\c&d&e\\0&0&f
\end{bmatrix}\in M_3(R)\mid a, c, d, e, f\in R, a - d = sc, d - f = te\right \}$.\end{center}
Then $H_{(s,t)}(R)$ is a subring of $M_3(R)$. Note that any
element $A$ of $H_{(s,t)}(R)$ has the form
$\begin{bmatrix}sc+te+f&0&0\\c&te+f&e\\0&0&f\end{bmatrix}$.
\begin{lem}\label{nil} Let $R$ be a ring, and let $s, t$ be in the center of $R$. Then the set of all nilpotent elements of $H_{(s, t)}(R)$ is \begin{center} nil$(H_{(s, t)}(R)) = \left
\{\begin{bmatrix}a&0&0\\c&d&e\\0&0&f\end{bmatrix}\in H_{(s,
t)}(R)\mid a, d, f\in  nil(R), c, e\in R\right \}$.\end{center}
\end{lem}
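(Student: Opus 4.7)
The plan is to exploit the natural ``diagonal'' ring homomorphism
$$\pi : H_{(s,t)}(R) \longrightarrow R\times R\times R, \qquad \pi\left(\begin{bmatrix}a&0&0\\c&d&e\\0&0&f\end{bmatrix}\right) = (a,d,f).$$
First I would verify that $\pi$ really is a ring homomorphism. Multiplying two generic elements of $H_{(s,t)}(R)$ and reading off the diagonal, the vanishing of the $(1,2)$, $(1,3)$, $(3,1)$, $(3,2)$ entries of each factor forces the diagonal of the product to equal $(a_1a_2,\,d_1d_2,\,f_1f_2) = \pi(A_1)\pi(A_2)$; preservation of addition is immediate.

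Next I would identify $\ker\pi$. Setting $a=d=f=0$ in the defining relations $a-d=sc$ and $d-f=te$ forces $sc=0$ and $te=0$, so
$$\ker\pi = \left\{\begin{bmatrix}0&0&0\\c&0&e\\0&0&0\end{bmatrix}\in H_{(s,t)}(R)\;\Big|\;sc=0,\ te=0\right\}.$$
A direct multiplication shows $(\ker\pi)^2 = 0$: for any two matrices of this shape, the only possibly nonzero entries of the left factor lie in row $2$, while the only possibly nonzero entries of the right factor lie in columns $1$ and $3$; but row $2$ of the right factor is zero in those columns, so the product vanishes.

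With these two facts the lemma follows from the standard ``lifting nilpotents through a nilpotent ideal'' argument. For the inclusion $\subseteq$, if $A^n=0$ then $\pi(A)^n=(a^n,d^n,f^n)=0$, so $a,d,f\in$ nil$(R)$. For the inclusion $\supseteq$, choose $N$ with $a^N=d^N=f^N=0$; then $\pi(A)^N=0$, so $A^N\in\ker\pi$, and hence $A^{2N}=(A^N)^2\in(\ker\pi)^2=0$.

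The only steps requiring care are the bookkeeping for multiplicativity of $\pi$ and the exact description of $\ker\pi$, where one must invoke the defining relations of $H_{(s,t)}(R)$ to rule out any additional freedom in the $c,e$ entries once $a=d=f=0$. No genuine obstacle is anticipated beyond these routine checks; the real structural input is that the off-diagonal ``cross'' piece of $H_{(s,t)}(R)$ squares to zero.
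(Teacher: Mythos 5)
Your proposal is correct and is essentially the paper's argument in a more structured form: the paper likewise reads off $a^n=d^n=f^n=0$ from the diagonal of $A^n$ for one direction, and for the converse asserts $A^{2p}=0$ with $p=\max\{n,m,k\}$ — exactly the fact your homomorphism $\pi$ plus the square-zero kernel makes explicit. The only difference is that you justify the exponent $2p$ (the paper merely states it), so no further comment is needed.
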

\begin{proof} Let $A = \begin{bmatrix}a&0&0\\c&d&e\\0&0&f\end{bmatrix}\in$ nil$(H_{(s, t)}(R))$. There exists a positive
integer $n$ such that $A^n = 0$. Then $a^n = d^n = f^n = 0$.
Conversely assume that $a^n = 0$, $d^m = 0$ and $f^k= 0$ for some
positive integers $n,m,k$. Let $p = max\{n, m, k\}$. Then $A^{2p} = 0$.
\end{proof}
\begin{lem}\label{mer} Let $R$ be a ring, and let $s$ and $t$ be central invertible in $R$. Then\\
$C(H_{(s, t)}(R)) = \left\{\begin{bmatrix}a& 0 & 0 \\c & d & e \\0 & 0 & f \end{bmatrix} \in H_{(s, t)}(R) \mid c,e,f\in C(R)\right \}$.
\end{lem}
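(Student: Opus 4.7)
The plan is to verify the set equality by two inclusions, in each case exploiting that $H_{(s,t)}(R)$ contains every scalar matrix $rI_3$ (since $(a',c',d',e',f')=(r,0,r,0,r)$ trivially satisfies both $a'-d'=sc'$ and $d'-f'=te'$, regardless of invertibility of $s,t$).

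For the inclusion $\subseteq$, I would take any $A=\begin{bmatrix}a&0&0\\c&d&e\\0&0&f\end{bmatrix}\in C(H_{(s,t)}(R))$ and specialize the commutation $AB=BA$ to $B=rI_3$ for an arbitrary $r\in R$. This unpacks entrywise to $ra=ar$, $rc=cr$, $rd=dr$, $re=er$, $rf=fr$, so every entry of $A$, and in particular $c,e,f$, must lie in $C(R)$. This direction is routine.

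For the inclusion $\supseteq$, assume $c,e,f\in C(R)$. Combined with $s,t\in C(R)$ and the defining relations $a-d=sc$ and $d-f=te$ of $H_{(s,t)}(R)$, we also get $a=sc+te+f$ and $d=te+f$ central, so for any $B=\begin{bmatrix}a'&0&0\\c'&d'&e'\\0&0&f'\end{bmatrix}\in H_{(s,t)}(R)$ the three diagonal entries of $AB-BA$ vanish automatically. The only real work is therefore in positions $(2,1)$ and $(2,3)$. For $(2,1)$, using $c,d\in C(R)$ one rewrites $ca'+dc'-c'a-d'c$ as $c(a'-d')-c'(a-d)=c(sc')-c'(sc)=s(cc'-c'c)=0$.

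The main obstacle is the $(2,3)$ entry, because the relation $d-f=te$ is not symmetric in $e$ and $f$. Here I would first use $e,f\in C(R)$ to rewrite $de'+ef'-d'e-e'f$ as $(d-f)e'-e(d'-f')$, then substitute the defining relations to obtain $te\cdot e'-e\cdot te'$, which collapses to $0$ once $t$ is commuted past $e$ using $t\in C(R)$. The bookkeeping in this step is the bulk of the argument, but no new idea is needed. Throughout, only centrality of $s,t$ is used; their invertibility does not enter the proof.
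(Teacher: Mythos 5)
Your proof is correct, and it is worth noting that the paper does not actually prove this lemma at all: its ``proof'' is a citation to \cite[Lemma 3.1]{KKUH}, so your self-contained verification is a genuinely different (and more informative) route. The computation checks out: since every $rI_3$ lies in $H_{(s,t)}(R)$, centrality of $A$ forces every entry of $A$ into $C(R)$, which gives $\subseteq$; conversely, with $c,e,f\in C(R)$ (hence $a=sc+te+f$ and $d=te+f$ central as well) the only off-diagonal positions of $AB-BA$ that can be nonzero are $(2,1)$ and $(2,3)$, and your identities $ca'+dc'-c'a-d'c=c(a'-d')-c'(a-d)=s(cc'-c'c)=0$ and $de'+ef'-d'e-e'f=(d-f)e'-e(d'-f')=tee'-tee'=0$ are exactly right. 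Your closing remark that invertibility of $s$ and $t$ is never used is not just a curiosity: the paper later invokes this lemma through Theorem \ref{guze} for arbitrary central $s,t$ and, in Example \ref{ters}, for $H_{(0,0)}(\Bbb Z)$ where $s=t=0$ are certainly not invertible, so the hypothesis-free version you actually proved is the one the rest of the paper needs.
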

\begin{proof} \cite[Lemma 3.1]{KKUH}. 
\end{proof}
\begin{thm}\label{guze} Let $R$ be a ring. $R$ is a CN ring if and only if $H_{(s, t)}(R)$ is a CN ring.
\end{thm}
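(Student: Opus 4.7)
The plan is to prove both directions using the parameterization of $H_{(s,t)}(R)$ together with Lemmas \ref{nil} and \ref{mer}, which identify the nilpotent and central elements by their ``parameters'' $(c,e,f)$. The invertibility of $s$ and $t$ in $C(R)$ (required by Lemma \ref{mer}) will be essential for the forward direction.

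For the backward direction, given $r\in R$, I would test with the scalar matrix $A = rI_3$, which lies in $H_{(s,t)}(R)$ (take $c=e=0$). A CN decomposition $A = C' + N'$ gives a central $C'$ with entries $(c_0,e_0,f_0)\in C(R)^3$ by Lemma \ref{mer} and a nilpotent $N'$ whose diagonal entries $r-a_0, r-d_0, r-f_0$ are all in nil$(R)$ by Lemma \ref{nil}. Reading off the $(3,3)$ coordinate gives $r = f_0 + (r-f_0)$, a CN decomposition of $r$.

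For the forward direction, the idea is to construct the decomposition from the bottom-right corner upward, chaining CN decompositions in $R$ so that the remainders telescope. Concretely, given $A\in H_{(s,t)}(R)$ with parameters $(c,e,f)$:
\begin{enumerate}
\item[(i)] Write $f = f_0 + f'$ with $f_0\in C(R)$ and $f'\in\mathrm{nil}(R)$.
\item[(ii)] Apply CN to $te + f'\in R$, obtaining $te + f' = \alpha + \beta$ with $\alpha\in C(R)$, $\beta\in\mathrm{nil}(R)$. Using that $t$ is central and invertible, set $e_0 = t^{-1}\alpha\in C(R)$; then the $(2,2)$ remainder is $d - (te_0 + f_0) = t(e-e_0) + f' = \beta \in\mathrm{nil}(R)$.
\item[(iii)] Apply CN to $sc + \beta\in R$, obtaining $sc + \beta = \gamma + \delta$ with $\gamma\in C(R)$, $\delta\in\mathrm{nil}(R)$. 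Set $c_0 = s^{-1}\gamma\in C(R)$; then the $(1,1)$ remainder is $a - (sc_0 + te_0 + f_0) = s(c-c_0) + \beta = \delta\in\mathrm{nil}(R)$.
\end{enumerate}
Take $C_0$ to be the element of $H_{(s,t)}(R)$ with parameters $(c_0, e_0, f_0)$; by Lemma \ref{mer}, $C_0\in C(H_{(s,t)}(R))$. The difference $A - C_0\in H_{(s,t)}(R)$ has diagonal $(\delta,\beta,f')\subseteq\mathrm{nil}(R)$, so by Lemma \ref{nil} it lies in $\mathrm{nil}(H_{(s,t)}(R))$.

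The main obstacle is that nilpotent elements of $R$ need not be closed under addition, so a naive ``take CN of each entry independently'' strategy fails: the $(1,1)$ and $(2,2)$ diagonal entries involve the sums $sc + te + f$ and $te + f$, and the corresponding nilpotent remainders would be sums of three (resp.\ two) uncontrolled nilpotents. The chaining above circumvents this by absorbing each remainder into the next CN decomposition, so that at each step we apply CN in $R$ to a single element rather than hoping a sum of nilpotents is nilpotent; central invertibility of $s$ and $t$ is what allows us to convert a central $\alpha$ or $\gamma$ back into central parameters $e_0 = t^{-1}\alpha$ and $c_0 = s^{-1}\gamma$.
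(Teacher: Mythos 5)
Your proof is correct, and in the forward direction it is actually tighter than the paper's. The converse is the same in both: test the scalar matrix $rI_3$ and read off the $(3,3)$ entry using Lemmas \ref{nil} and \ref{mer}. For the forward direction the paper instead CN-decomposes all five entries $a,c,d,e,f$ independently in $R$ and then simply declares ``Let $c_1-c_2=sc_4$, $c_2-c_3=tc_5$, $n_1-n_2=sn_4$, $n_2-n_3=tn_5$'' --- relations that arbitrary independent decompositions have no reason to satisfy, and without which the proposed $C$ and $N$ need not even lie in $H_{(s,t)}(R)$. Your bottom-up chaining of the parameters $(f,e,c)$, converting each central remainder back into a central parameter via $t^{-1}$ and $s^{-1}$, genuinely closes that gap: it produces $C_0\in C(H_{(s,t)}(R))$ and a remainder in $H_{(s,t)}(R)$ with nilpotent diagonal, exactly what Lemmas \ref{mer} and \ref{nil} require. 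One small correction to your commentary: the obstacle is not really that ``sums of nilpotents'' appear --- Lemma \ref{nil} only constrains the diagonal entries, so one could equally well CN-decompose $a$, $d$, $f$ directly (one nilpotent remainder each) and then solve for the off-diagonal parameters of $C$ and $N$ using $s^{-1}$, $t^{-1}$; the genuine issue is membership of $C$ and $N$ in $H_{(s,t)}(R)$, which your construction and this repaired version both handle. Finally, you are right to flag that invertibility of $s$ and $t$ is being used (via Lemma \ref{mer} and the divisions by $s$ and $t$); the theorem as stated omits this hypothesis, and the paper even applies it later to $H_{(0,0)}(\Bbb Z)$, so your explicit acknowledgment of the assumption is a point in your favor rather than a defect.
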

\begin{proof} Assume that $R$ is a CN ring. Let $A = \begin{bmatrix}a&0&0\\c&d&e\\0&0&f\end{bmatrix}\in (H_{(s, t)}(R))$. Then $a = c_1 + n_1$, $d = c_2 + n_2$, $f = c_3 + n_3$, $c = c_4 + n_4$, $e = c_5 + n_5$ with $\{c_1, c_2, c_3, c_4, c_5\}\subseteq C(R)$, $\{n_1, n_2, n_3, n_4, n_5\}\subseteq $nil$ (R)$. Let $c_1 - c_2 = sc_4$, $c_2 - c_3 = tc_5$, $n_1 - n_2 = sn_4$ and $n_2 - n_3 = tn_5$ and $C = \begin{bmatrix}c_1&0&0\\c_4&c_2&c_5\\0&0&c_3\end{bmatrix}$ and $N = \begin{bmatrix}n_1&0&0\\n_4&n_2&n_5\\0&0&n_3\end{bmatrix}$. By Lemma \ref{mer}, $C\in C(H_{(s, t)}(R))$ and by Lemma \ref{nil}, $N\in$ nil$(H_{(s, t)}(R))$. Then $A = C + N$ is the CN decomposition of $A$.

Conversely, suppose that $H_{(s, t)}(R)$ is a CN ring. Let $a\in
R$. Then $A = \begin{bmatrix}a&0&0\\0&a&0\\0&0&a\end{bmatrix}\in
(H_{(s, t)}(R))$ and it has a CN decomposition $A = C + N$ where
$C = \begin{bmatrix}x& 0 & 0 \\y & z & u \\0 & 0 & v \end{bmatrix}
\in C(H_{(s, t)}(R))$ with $\{y, u, v\}\subseteq C(R)$ and $N =
\begin{bmatrix}n_1&0&0\\n_2&n_3&n_4\\0&0&n_5\end{bmatrix}\in$
nil$(H_{(s, t)}(R))$ with $\{n_1, n_3, n_5\}\subseteq$  nil$(R)$.
Then $a = x + n_1$ is a CN decomposition of $a$.
\end{proof}

\begin{prop} Uniquely nil clean rings, uniquely strongly nil clean rings, strongly nil *-clean rings are CN.
\end{prop}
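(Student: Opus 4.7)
The plan is to treat each of the three classes separately, in each case taking the given idempotent-plus-nilpotent decomposition and showing that the idempotent part is actually central, so that it already serves as the central summand in a CN decomposition.

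For a uniquely nil clean ring $R$ and $a\in R$, one has $a=e+n$ with $e^2=e$ and $n\in\mathrm{nil}(R)$. The Introduction explicitly recalls that in a uniquely nil clean ring every idempotent is central, so $e\in C(R)$ and $a=e+n$ is already a CN decomposition. A uniquely strongly nil clean ring is in particular uniquely nil clean, so the same argument applies verbatim; no separate work is needed for this case.

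For a (uniquely strongly) nil $*$-clean ring, each $a$ has the form $a=p+n$ with $p$ a projection (hence in particular an idempotent), $n\in\mathrm{nil}(R)$, and $pn=np$. What remains is to show $p\in C(R)$. My plan is to argue that every projection of $R$ is central, either by first deducing that such a ring is uniquely (strongly) nil clean and then invoking the previous paragraph, or by directly citing the centrality-of-projections result from \cite{LZ}. Once $p\in C(R)$ is established, $a=p+n$ is automatically a CN decomposition.

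The main obstacle is this last case. The uniquely nil clean argument uses uniqueness of the idempotent among all idempotents, whereas in the $*$-clean setting uniqueness is imposed only among projections; so transferring the centrality conclusion to projections requires a short additional step (or an appeal to \cite{LZ}). The first two cases, by contrast, are essentially immediate from the fact recalled in the Introduction.
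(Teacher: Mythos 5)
Your proposal follows essentially the same route as the paper: in each case one shows the idempotent (resp.\ projection) in the nil-clean decomposition is central, so that decomposition is already a CN decomposition. The only difference is that the paper re-derives the centrality of idempotents in a uniquely nil clean ring via the standard argument with the idempotent $e+(re-ere)$, whereas you cite that fact from the Introduction; both treatments handle the $*$-clean case by deferring to the literature, a point you at least flag explicitly.
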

\begin{proof} These classes of rings are abelian. Assume that $R$ is uniquely nil clean ring. Let $e$ be an idempotent in $R$. For any $r\in R$, $e + (re - ere)$ can be written in two ways as a sum of an idempotent and a nilpotent as
$e + (re - ere) = (e + (re -ere)) + 0 = e + (re - ere)$. Then $e = e + (re - ere)$ and $er - ere = 0$. Similarly, $e + (er - ere) = (e + (re -ere)) + 0 = e + (er - ere)$. Then $0 = re - ere = re - ere$. Hence $e$ is central.
\end{proof}

The converse of this result is not true.

\begin{ex}\label{ters}  The ring $H_{(0, 0)}(\Bbb Z)$ is $CN$ but not uniquely nil clean.
\end{ex}
\begin{proof} By Theorem \ref{guze}, $H_{(0, 0)}(\Bbb Z)$ is $CN$. Note that for $n\in \Bbb Z$ has a uniquely nil clean decomposition if and only if $n = 0$ or $n = 1$. Let $ A = \begin{bmatrix}a&0&0\\c&a&e\\0&0&a\end{bmatrix}\in H_{(0, 0)}(R)$ with $a\notin \{0, 1\}$. Assume that $A$ has a
 uniquely nil clean decomposition. There exist unique $E^2 = E =
 \begin{bmatrix}x&0&0\\y&x&u\\0&0&x\end{bmatrix}\in H_{(0, 0)}(R)$
and $N = \begin{bmatrix}g&0&0\\h&g&l\\0&0&g\end{bmatrix}\in$
$N(H_{(0, 0)}(R)$ such that $A = E + N$. Then $A$ has a uniquely
nil clean decomposition. So $a = x + g$ has a CN decomposition.
This is not the case for $a\in \Bbb Z$. Hence $H_{(0, 0)}(\Bbb Z)$
is not uniquely nil clean.
\end{proof}

{\bf Generalized matrix rings:} Let $R$ be ring and $s$ a central
element of $R$. Then  $\begin{bmatrix} R&R\\R&R\end{bmatrix}$
becomes a ring denoted by $K_s(R)$ with addition defined
componentwise and with multiplication defined in \cite{Kr} by
$$\begin{bmatrix} a_1&x_1\\y_1&b_1\end{bmatrix}\begin{bmatrix}
a_2&x_2\\y_2&b_2\end{bmatrix} = \begin{bmatrix}a_1
a_2+sx_1y_2&a_1x_2+x_1b_2\\y_1a_2+b_1y_2&sy_1x_2+b_1b_2\end{bmatrix}.$$
In \cite{Kr}, $K_s(R)$ is called a {\it generalized matrix ring
over $R$}.
\begin{lem}\label{inv} Let $R$ be a commutative ring. Then the following
hold.
\begin{enumerate}
\item[(1)] nil$(K_0(R)) = \left\{\begin{bmatrix}a&b\\c&d\end{bmatrix}\in K_0(R)\mid \{a, d\}\subseteq nil(R)\right\}$.\item[(2)] $C(K_0(R))$ consists of all scalar matrices.
\end{enumerate}
\end{lem}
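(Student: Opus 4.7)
The plan is to read off the structure of powers and commutators in $K_0(R)$ directly from the twisted multiplication, specialized at $s=0$, where the diagonal entries multiply independently while the off-diagonal entries pick up only diagonal contributions.

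For part (1), I would first note that with $s=0$, the product formula reduces to
\[
\begin{bmatrix} a_1&x_1\\y_1&b_1\end{bmatrix}\begin{bmatrix} a_2&x_2\\y_2&b_2\end{bmatrix} = \begin{bmatrix}a_1 a_2 & a_1 x_2 + x_1 b_2\\ y_1 a_2 + b_1 y_2 & b_1 b_2\end{bmatrix}.
\]
An immediate induction then gives, for $A=\begin{bmatrix}a&b\\c&d\end{bmatrix}$,
\[
A^{k} \;=\; \begin{bmatrix} a^{k} & \left(\sum_{i+j=k-1} a^{i} d^{j}\right) b \\ \left(\sum_{i+j=k-1} a^{i} d^{j}\right) c & d^{k}\end{bmatrix},
\]
using commutativity of $R$ to pull $b$ and $c$ out. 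The forward direction of (1) is then trivial: $A^{k}=0$ forces $a^{k}=d^{k}=0$. For the converse, given $a^{n}=0$ and $d^{m}=0$, I would take $k=n+m$; each monomial $a^{i}d^{j}$ in the off-diagonal sum satisfies $i+j=n+m-1$, so either $i\geq n$ or $j\geq m$, killing it, while $a^{k}=d^{k}=0$ as well.

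For part (2), I would test an arbitrary $C=\begin{bmatrix}a&b\\c&d\end{bmatrix}\in C(K_0(R))$ against three convenient elements. Computing $CE_{12}$ versus $E_{12}C$ (with $E_{12}=\begin{bmatrix}0&1\\0&0\end{bmatrix}$) gives $a=d$; computing $CE_{21}$ versus $E_{21}C$ forces $b=0$; and testing against $rE_{11}$ for arbitrary $r\in R$ forces $c=0$ (taking $r=1$). Since $R$ is commutative, the remaining scalar condition $ar=ra$ is automatic, so $C=aI$. Conversely, a direct check from the product formula shows that scalar matrices $aI$ commute with every element of $K_0(R)$.

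I do not expect any real obstacle here; the main care is in the off-diagonal induction formula for $A^{k}$, where one must verify that commutativity of $R$ is exactly what lets the coefficients of $b$ and $c$ be written as the same polynomial $\sum a^{i}d^{j}$ on both sides, and in choosing the correct exponent $n+m$ (rather than $n+m-1$) to guarantee annihilation of every monomial.
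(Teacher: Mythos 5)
Your proof of part (1) follows essentially the same route as the paper: both arguments compute powers of $A$ and observe that the diagonal entries are just $a^k$ and $d^k$ while the off-diagonal entries are multiples of $b$ and $c$. The paper iterates squaring to reach $A^{2^n}$ (its displayed off-diagonal entries should be products $b\prod_{i=1}^n\bigl(a^{2^{i-1}}+d^{2^{i-1}}\bigr)$ rather than the sums it writes, but the idea is the same), whereas your closed form $A^k=\begin{bmatrix} a^{k} & p_{k-1}b\\ p_{k-1}c & d^{k}\end{bmatrix}$ with $p_{k-1}=\sum_{i+j=k-1}a^id^j$ and the choice $k=n+m$ is correct, cleaner, and actually justifies the converse direction more carefully than the paper does. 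For part (2) the paper offers no proof at all, so your argument is a genuine addition; it is correct in substance but contains one misattribution. With $s=0$ one finds $CE_{21}=\begin{bmatrix}0&0\\ d&0\end{bmatrix}$ and $E_{21}C=\begin{bmatrix}0&0\\ a&0\end{bmatrix}$, so the test against $E_{21}$ yields only $a=d$ again, not $b=0$. It is the test against $E_{11}$ that kills both off-diagonal entries: $CE_{11}=\begin{bmatrix}a&0\\ c&0\end{bmatrix}$ while $E_{11}C=\begin{bmatrix}a&b\\ 0&0\end{bmatrix}$, forcing $b=c=0$. Since $E_{11}$ is already among your three test matrices, the computations you propose do collectively establish $C=aI$; only the bookkeeping of which test gives which conclusion needs correcting.
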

\begin{proof} (1) Let $A  =  \begin{bmatrix}a&b\\c&d\end{bmatrix}\in$ nil$(K_0(R))$. Then $A^2 = \begin{bmatrix}a^2&b(a+d)\\c(a+d)&d^2\end{bmatrix}$, ..., \\
 ... , $A^{2^{n}} = \begin{bmatrix}a^{2^{n}}&\Sigma^n_{i=1} b(a^{2^{i-1}}+d^{2^{i-1}})\\\Sigma^n_{i=1} c(a^{2^{i-1}}+d^{2^{i-1}})&d^{2^n}\end{bmatrix}$. Hence $A\in$ nil$(K_0(R))$ if and only if $\{a, d\}\subseteq$ nil$(R)$.
\end{proof}

\begin{lem} Let $R$ be ring. Then $R$ is a CN ring if and only if $D_n(K_0(R))$ is a CN ring.
\end{lem}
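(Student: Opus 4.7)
The plan is to invoke Proposition \ref{bes} with ring $S=K_0(R)$ so as to reduce the question to the simpler equivalence: $R$ is CN if and only if $K_0(R)$ is CN. Since that proposition gives $D_n(S)$ CN iff $S$ CN for any ring $S$, once this reduction is in place it suffices to prove $R$ CN $\iff$ $K_0(R)$ CN.

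For the easier direction, assume $K_0(R)$ is CN. Given $a\in R$, consider the diagonal element $A=\begin{bmatrix}a&0\\0&a\end{bmatrix}\in K_0(R)$ and write $A=C+N$ as a CN decomposition. Lemma \ref{inv}(2) identifies $C$ as a scalar matrix $cI$; a short commutation check against $\begin{bmatrix}0&1\\0&0\end{bmatrix}$, $\begin{bmatrix}0&0\\1&0\end{bmatrix}$, and $\begin{bmatrix}r&0\\0&r\end{bmatrix}$ for $r\in R$ extends the conclusion to noncommutative $R$ and forces $c\in C(R)$. Lemma \ref{inv}(1) then forces the diagonal entries of $N$ to be nilpotent in $R$, so extracting the $(1,1)$-entry of $A=C+N$ yields $a=c+(a-c)$ with $c\in C(R)$ and $a-c\in$ nil$(R)$, a CN decomposition in $R$.

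For the converse, assume $R$ is CN and let $A=\begin{bmatrix}\alpha&\beta\\\gamma&\delta\end{bmatrix}\in K_0(R)$. A putative decomposition $A=xI+N$ will satisfy the CN conditions precisely when $x\in C(R)$ and both $\alpha-x$ and $\delta-x$ lie in nil$(R)$; the off-diagonal entries $\beta,\gamma$ impose no constraint at all, since nilpotence in $K_0(R)$ only sees the diagonal by Lemma \ref{inv}(1). My first attempt would be to apply the CN hypothesis separately to $\alpha$ and $\delta$, obtaining $\alpha=x_1+n_1$ and $\delta=x_2+n_2$, and then attempt to fuse the two central witnesses $x_1,x_2$ into a single $x\in C(R)$ that works for both rows.

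The main obstacle is precisely this fusion step. In general $x_1\neq x_2$, and nothing in the hypothesis ``$R$ is CN'' supplies a common central witness for two unrelated elements of $R$ — compare Proposition \ref{id}, which shows that an exactly analogous simultaneous-nilpotence requirement controls CN-ness of $U_2(R)$, and can fail even over commutative rings. How the argument handles this is the crux on which the proof stands or falls, and I would expect either a structural trick exploiting the twisted multiplication of $K_0(R)$ to absorb the discrepancy $x_1-x_2$ into a nilpotent piece, or an appeal to finer information about $R$ than mere CN-ness.
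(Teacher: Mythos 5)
You do not arrive at a proof, and the obstruction you flag at the end is real under your reading of the notation — so the first thing to say is that the reduction you chose proves (or rather, fails to prove) a different statement from the one the paper establishes. If $D_n(K_0(R))$ is taken literally, as the ring of $n\times n$ matrices over the ring $K_0(R)$ with equal diagonal entries, then Proposition \ref{bes} does reduce the lemma to ``$R$ is CN iff $K_0(R)$ is CN,'' and the forward implication of that is simply false: the paper's very next example shows that $\begin{bmatrix}1&0\\0&0\end{bmatrix}$ has no CN decomposition in $K_0(\Bbb Z)$, although $\Bbb Z$ is CN. So no structural trick with the twisted multiplication, and no extra hypothesis on $R$, will fuse your two central witnesses $x_1,x_2$; the assertion you set out to prove is not true, and your correct backward direction plus a correct diagnosis of why the forward direction breaks cannot be completed into a proof.

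The paper's own proof reveals that the symbol is meant differently: there $D_2(K_0(R))$ denotes the subring of the generalized matrix ring $K_0(R)$ consisting of the elements $\begin{bmatrix}a&b\\0&a\end{bmatrix}$ with $a,b\in R$ — the diagonal entry is a single element of $R$, not of $K_0(R)$. Since $s=0$ contributes nothing to products of upper-triangular elements, this subring multiplies exactly like $D_2(R)$, and the argument is the same one-line argument as in Proposition \ref{bes}: write $a=c_1+n_1$ with $c_1\in C(R)$ and $n_1\in\mathrm{nil}(R)$, set $C=\begin{bmatrix}c_1&0\\0&c_1\end{bmatrix}$ and $N=\begin{bmatrix}n_1&b\\0&n_1\end{bmatrix}$; for the converse, read off the $(1,1)$-entry of a CN decomposition of $\begin{bmatrix}a&0\\0&a\end{bmatrix}$. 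Only one central witness is ever required, so the fusion problem you isolated never arises. In short: your partial argument is sound as far as it goes, but it addresses a (false) statement that differs from the one the paper intends, and the gap you correctly identified is closed only by reinterpreting the notation, not by a further idea.
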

\begin{proof} Necessity: We assume that $n = 2$. Let $A =
\begin{bmatrix}a&b\\0&a\end{bmatrix}\in D_2((K_0(R)))$. By
assumption $a = c_1 + n_1$ where $c_1\in C(R)$ and $n_1\in$
nil$(R)$. Let $C = \begin{bmatrix}c_1&0\\0&c_1\end{bmatrix}\in
C(D_2(K_0(R)))$ and $N =
\begin{bmatrix}n_1&b\\0&n_1\end{bmatrix}\in$ nil$D_2((K_0(R)))$. $A = C + N$ is the CN decomposition of $A$.\\Sufficiency:
Let $a\in R$. Then $A = \begin{bmatrix}a&0\\0&a\end{bmatrix}\in
D_2((K_0(R)))$ has a CN decomposition $A = C + N$ with $C =
\begin{bmatrix}c_1&0\\0&c_1\end{bmatrix}\in C(D_2((K_0(R))))$ and
$N = \begin{bmatrix}n_1&b_1\\0&n_1\end{bmatrix}\in$
nil$(D_2((K_0(R))))$ where $c_1\in C(R)$ and $n_1\in$ nil$(R)$. By
comparing components of matrices we get $a = c_1 + n_1$. It is a CN
decomposition of $a$.
\end{proof}
Note that $K_0(R)$ need not be a CN ring.
\begin{ex}{\rm  Let $A = \begin{bmatrix}1&0\\0&0\end{bmatrix}\in K_0(\Bbb Z)$ have a CN decomposition as $A = C + N$ where $C\in C(K_0(\Bbb Z))$ and $N \in$nil$ K_0(\Bbb Z))$. Then we should have $C = \begin{bmatrix}x&0\\0&x\end{bmatrix}$ and $N = \begin{bmatrix}1-x&0\\0&-x\end{bmatrix}$. These imply $x = 1$ or $x$ is nilpotent. A contradiction.}
\end{ex}


\begin{thebibliography}{000}



\bibitem{CU} H. Chen, S. Halicioglu, A. Harmanci and Y. Kurtulmaz,{\it On the Decomposition of a Matrix into the Sum of a Central Matrix and a Unit Matrix }, submitted for publication.

\bibitem{Di} A. J. Diesl, {\it Nil clean rings}, J. Algebra, 383(2013), 197-211.

\bibitem{Dr} J. L. Dorroh, {\it Concerning Adjunctions to Algebras}, Bull. Amer. Math. Soc., 38(1932), 85-88.


\bibitem{KKUH} H. Kose, Y. Kurtulmaz, B. Ungor and A. Harmanci, {\it Rings Having Normality in terms of the Jacobson
Radical}, Arab. J. Math., https://doi.org/10.1007/s40065-018-0231-7.
\bibitem{HTY} Y. Hirano, H. Tominaga and A. Yaqub, {\it On rings in which every element is uniquely expressable as a sum of a nilpotent element and a certain potent element}, Math. J. Okayama Univ. 30(1988), 33-40.

\bibitem{Kr} P. A. Krylov, {\it Isomorpism of generalized matrix rings}, Algebra and Logic, 47(4)(2008), 258-262.

\bibitem{LZ} C. Li and Y. Zhou, {\it On strongly *-clean rings}, J. Algebra Appl. 10(2011), 1363-1370.

%
\end{thebibliography}
\end{document}